\newtheorem{thm}{Theorem}[section]
\newtheorem{theorem}[thm]{Theorem}
\newtheorem*{theorem*}{Theorem}
\newtheorem{corollary}[thm]{Corollary}
\newtheorem{lemma}[thm]{Lemma}
\newtheorem{proposition}[thm]{Proposition}
\theoremstyle{definition}
\newtheorem{definition}[thm]{Definition}
\theoremstyle{remark}
\numberwithin{equation}{section}
\newtheorem{example}[thm]{Example}
\newcommand{\RR}{\mathbb{R}}
\newcommand{\CC}{\mathbb{C}}
\newcommand{\NN}{\mathbb{N}}
\newcommand{\QQ}{\mathbb{Q}}
\newcommand{\KK}{\mathbb{K}}
\newcommand{\ZZ}{\mathbb{Z}}
\newcommand{\Arg}{\mathop{\rm Arg}}
\newcommand{\intprod}{\!\mathbin{\hbox to 6pt
{\vrule height0.4pt width5pt depth0pt \kern-.4pt
\vrule height6pt width0.4pt depth0pt\hss}}\!}
\newcommand{\re}[1]{(\ref{#1})}
\newcommand{\rt}[1]{Theorem~\ref{#1}}
\def\red#1{{\color{red} #1  }}
\def\beq{\begin{equation}}
\def\eeq{\end{equation}}
\begin{document}

\title{Complete integrability of diffeomorphisms and their local normal forms}

\author{Kai Jiang}
\address{Beijing international center for mathematical research, Peking University\\and Laboratoire J.A. Dieudonn\'e,
UMR CNRS 7351 Universit\'e de Nice Sophia-Antipolis}
\email{kai.jiang@bicmr.pku.edu.cn}

\author{Laurent Stolovitch}
\address{CNRS and Laboratoire J.A. Dieudonn\'e,UMR CNRS 7351 Universit\'e de Nice Sophia-Antipolis}
\email{stolo@unice.fr}

\thanks{This work has been supported by the French government, through the UCAJEDI Investments in the Future project managed by the National Research Agency (ANR) with the reference number ANR-15-IDEX-01 and by ANR project BEKAM with the reference number ANR-15-CE40-0001-03}

\begin{abstract}
	In this paper, we consider the normal form problem of a commutative family of germs of diffeomorphisms at a fixed point, say the origin,  of $\KK^n (\KK=\CC \text{~or~} \RR)$. We define a notion of {\it integrability} of such a family.
	We give sufficient conditions which ensure that such an integrable family can be transformed into a normal form by an analytic (resp. a smooth)  transformation 
	if the initial diffeomorphisms are  analytic (resp. smooth).
\end{abstract}

\date{Version5c, Dec. 2019}
\subjclass{58K50}
\keywords{normalization,}%

\maketitle
\hspace{9cm}{\it A la mémoire de Walter}\\

\section{Introduction}
When studying dynamical systems with continuous time (i.e. systems of differential equations) or discrete time (i.e. diffeomorphisms), special  solutions, such as fixed points also called equilibrium points, attract a lot of attention. In particular, one needs to understand the behavior of nearby solutions. This usually requires some deep analysis involving {\it normal forms} \cite{Arn-geom}, which are models supposed to capture the very nature of the dynamics to which the initial dynamical system is conjugate.
When considering analytic or smooth dynamical systems, one needs extra assumptions in order to really obtain dynamical and geometrical information on the initial dynamical system via its normal form. These assumptions can sometimes be understood as having a lot of symmetries. This led to the concept of {\it integrability}. 

In the framework of differential equations or vector fields\red{,} a first attempt to define such a notion for Hamiltonian systems is due to Liouville  \cite{Liouville1855}. This led, much later, to the now classic Liouville-Mineur-Arnold theorem \cite{Arn1} which provides action-angle coordinates by a canonical transformation. For a general concept of action-angle coordinates we refer to \cite{Zung2018}.
In 1978, J. Vey studied in the groundbreaking work \cite{Vey1978}, a family of $n$ Poisson commuting analytic Hamiltonian functions 
in a neighborhood of a common critical point. Under a generic condition on their Hessians,  he proved that the family can be simultaneously transformed into a (Birkhoff) normal form. This system of Hamiltonian has to be understood as ``completely integrable system''. Later, H. Elliason, H. Ito, L. Stolovitch, N.T.Zung 
to name a few, generalized or improved J. Vey's theorem in different aspects including non-Hamiltonian setting \cite{Eliasson1984, Eliasson1990, Ito1989, Ito1992, Stolovitch2000, Stolovitch2005, Zung2005}. This has been recently developed in the context of PDE's as infinite dimensional dynamical systems \cite{kappeler-poschel-book,kuksin-perelman, BS20}. In a different context of global dynamics, a notion of "integrable maps" has been devised relative to long-time behaviour of their orbits and their {\it complexity} \cite{veselov-intg-Russian,veselov-CMP}.

In \cite{Bogoyavlenskij1998}, a new integrability condition for non-Hamiltonian vector fields was established, which involves commuting vector fields and common first integrals. Concretely, such an integrable system on an $n$-dimensional manifold consists of $p$ independent commuting vector fields and $n-p$ functionally independent common first integrals. For a local integrable system near a common equilibrium point of the vector fields, the $p$ vector fields (resp. $n-p$ first integrals) may be not always independent (resp. functionally independent), so they are required to be independent (resp. functionally independent) almost everywhere.
Then one can seek for a simultaneous Poincar\'e-Dulac normal form (named ``normal form'' for short) of the vector fields. Such a transformation can be obtained under certain non-degeneracy conditions \cite{Stolovitch2000,Zung2015,Jiang2016}.

We aim at considering, in the same spirit, discrete dynamical systems given by a family of germs of commuting diffeomorphisms at a fixed.  On the one hand, the simultaneous linearization of such  holomorphic family under an appropriate "small divisors" condition has been treated by the second author \cite{stolo-bsmf}. On the other hand and to the best of our knowledge, the only known result in this spririt, related to "integrability of diffeomeorphisms" is due to X. Zhang \cite{Zhang2013} who considered a single diffeomorphism near a fixed point. On the other hand,  In this article, we propose an analogue notion of integrability of a family of commuting diffeomorphisms near a common fixed point, then we  explore their local behavior and study their normal forms. 

In this paper, we consider local diffeomorphisms on $(\KK^n,0)$  having the form
\begin{equation}
\label{eq:11}
\Phi(x)=Ax+\textit{higher order terms}
\end{equation}
such that the coefficient matrix $A$ of the linear part at the origin has a (real or complex) logarithm, i.e., there exists a matrix $B$ such that $A=e^B$. It is known that a complex matrix has a logarithm if and only if it is invertible \cite{Gantmakher1959};  a real matrix has a real logarithm if and only if it is invertible and each Jordan block belonging to a negative eigenvalue occurs an even number of times \cite{Culver1966}.

Let $\Phi$ be a germ of diffeomorphism near a fixed point, say the origin. Then, for any integer $k\geqslant 1$, $\Phi^{(k)}$ denotes the homogeneous polynomial of degree $k$ of the Taylor expansion at the origin of $\Phi$.

\begin{definition}[Integrability, local version]
  Let $\Phi$ be a local diffeomorphism on $\KK^n (\KK=\CC \text{~or~} \RR)$  having the origin $0$ as its isolated fixed point. If there exists $p\geqslant1$ pairwise commuting (germs of) diffeomorphisms $\Phi_1=\Phi,\Phi_2,\ldots,\Phi_p$ of the form \eqref{eq:11} with $D\Phi_i(0)=A_i$ and $q=n-p$ common first integrals $F_1,\ldots,F_q$ of the diffeomorphisms such that
   \begin{itemize}
	 \item the diffeomorphisms are independent in the following sense:  the matrices $\{\ln A_i\}_{i=1,\ldots, p}$ are linearly independent over $\mathbb{K}$; if $\KK=\CC$ then $\ln A_i$ are not unique and we require the independence of  families of all possible logarithms;
     \item the first integrals are functionally independent almost everywhere, i.e., the wedge of differentials of the first integrals satisfies $dF_1\wedge\cdots\wedge dF_q\neq0$ almost everywhere,
   \end{itemize}
  then  $\Phi$ is called a \textbf{completely integrable} diffeomorphism and we say \break $(\Phi_1=\Phi,\Phi_2,\ldots,\Phi_p,F_1,\ldots,F_q)$ is a (discrete) completely integrable system of type $(p,q)$.
\end{definition}

We remark that analytic integrable systems in $n$-dimensional Euclidean spaces containing a single diffeomorphism and $n-1$ functionally independent first integrals were studied \cite{Zhang2008,Zhang2013} and a local normal form was obtained under a mild generic condition.

We now introduce the notion of non-degeneracy of integrable diffeomorphisms.
\begin{definition}[non-degeneracy]
  We say that a local discrete integrable system \break $(\Phi_1,\ldots,\Phi_p,F_1,\ldots,F_q)$ of type $(p,q)$ is \textbf{non-degenerate}, if for all $i=1,\ldots,p$,
  \begin{itemize}
    \item the linear part $\Phi_i^{(1)}$ of the diffeomorphism $\Phi_i$ at the origin is semi-simple, i.e., write $\Phi_i^{(1)}(x)=A_i x$, then the coefficient matrix $A_i$ is diagonalizable over $\CC$;
    \item there exist $q$ functionally independent homogeneous polynomials $P_1,\ldots,P_q$ such that $(\Phi_1^{(1)},\ldots,\Phi_p^{(1)},P_1,\ldots,P_q)$ is a linear (discrete) completely integrable system of type $(p,q)$;
  \end{itemize}
\end{definition}

The notion of the non-degeneracy of commuting diffeomorphisms follows that of non-degeneracy of commuting vector fields defined in \cite{Zung2015}. The first condition is generic in the sense that almost all matrices are diagonalizable over $\CC$; and the second condition is automatically satisfied for formal or analytic integrable systems by Ziglin's lemma \cite{Ziglin1982}.

If there exist logarithms $\ln A_i$ of $A_i$ such that any common first integral of $\Phi_1^{(1)},\ldots,\Phi_p^{(1)}$ is also a common first integral of the linear vector fields $X_i$ defined by $\ln A_i$, then $X_1,\ldots,X_p$ together with $P_1,\ldots,P_q$ form a linear non-degenerate integrable system of type $(p,q)$. In such a case, the family of (linear) integrable diffeomorphisms is said to be {\it infinitesimally integrable} and $X_1,\ldots,X_p$ are called {\it infinitesimal (linear) generators} in the sense $\Phi_i^{(1)}=e^{X_i}$, and we pick and fix one such family of vector fields if the logarithms are not unique.

The following example shows that not all linear integrable diffeomorphisms is infinitesimally integrable.
\begin{example}
\label{ex:13}
The integrable system  $\Phi(x,y)=(-2x,\frac{1}{2}y),F=x^2y^2$ on $\CC^2$ of type $(1,1)$. The corresponding vector field $X=(\ln2+(2K_1+1)\sqrt{-1}\pi)x\frac{\partial}{\partial x}-(\ln2+2 K_2\sqrt{-1}\pi) y\frac{\partial}{\partial y}$ does not admit any homogeneous first integral for all integers $K_1, K_2$. Indeed, if $X(x^py^q)=0$ for some natural integers $p,q$, then we would have
$(\ln 2)(p-q)+2\sqrt{-1}\pi[(K_1+\frac{1}{2})p-K_2q]=0$. Then vanishing of the real part leads to $p=q$ so that the vanishing of the imaginary part reads $(K_1-K_2+\frac{1}{2})p=0$; this is not possible.
\end{example}

\section{Preliminaries and formal normal forms}


In this section, we introduce some notions and lemmas in order to well organize the proof of the main theorem.
The first lemma is analogue to the Poincar\'e-Dulac normal form for commuting vector fields. It requests neither integrability nor non-degeneracy.
\begin{lemma}[Th\'eor\`eme 4.3.2 in \cite{Chaperon1986Asterisque}]
\label{lem:PD-NF}
  Let $\Phi_1,\Phi_2,\ldots,\Phi_p$ be $p$ commuting diffeomorphisms in $\KK^n$ around $0$.  Let $\Phi_{j}^{ss}$ be the semi-simple part of the Jordan decomposition of the linear part of $\Phi_i$ at the origin. There exists a formal transformation  $\hat\Psi$ such that,  $\hat \Phi_{i}\circ\Phi_{j}^{ss}=\Phi_{j}^{ss}\circ\hat\Phi_{i}$ for all $i,j=1,2,\ldots,p$, where  $\hat \Phi_{i}:=\hat\Psi^{-1}\circ\Phi_i\circ\hat\Psi$. We say the diffeomorphisms are in Poincar\'e-Dulac normal form. Moreover, when $\KK=\CC$, let $\rho$ be an anti-holomorphic involution. Assume that $\Phi_i\rho=\rho\Phi_i$ for all $i$, then $\hat\Psi$ can be well chosen such that $\hat\Psi\rho=\rho\hat\Psi$ as well, and we call it $\rho$-equivariant normalization. 
\end{lemma}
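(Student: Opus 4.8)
The plan is to build $\hat\Psi$ as an infinite formal composition $\hat\Psi=\cdots\circ(\mathrm{id}+U_3)\circ(\mathrm{id}+U_2)$, where $U_k$ is a homogeneous polynomial map of degree $k$, arranged so that after conjugation every nonlinear Taylor coefficient of each $\Phi_i$ commutes with the semisimple parts $S_j:=\Phi_j^{ss}$. I would argue by induction on $k\geqslant 2$: assuming the conjugated diffeomorphisms already satisfy $\phi_i^{(m)}\in\ker d_{S_j}$ for all $j$ and all $2\leqslant m<k$ (here $\phi_i^{(m)}$ denotes the degree-$m$ homogeneous part of $\Phi_i$ for $m\geqslant 2$, and $d_{A}(U)(x):=A\,U(x)-U(Ax)$ is the homological operator), I would produce $U_k$ so that the same holds at degree $k$.

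The underlying linear algebra is as follows. Commutativity of the $\Phi_i$ forces their linear parts $A_i=D\Phi_i(0)$ to commute, hence so do the semisimple and nilpotent parts $S_i,N_i$ of their Jordan decompositions; over $\CC$ the $S_i$ can be simultaneously diagonalized after a linear change of coordinates. A direct computation gives $[d_{A_i},d_{A_j}]=d_{[A_i,A_j]}=0$, and $d_{A_i}=d_{S_i}+d_{N_i}$ is the Jordan decomposition of $d_{A_i}$ on the finite-dimensional space $H_k$ of degree-$k$ homogeneous maps, with $d_{S_i}$ diagonal in the monomial basis. Consequently $H_k$ splits into joint eigenspaces $H_k^\lambda$ of the commuting semisimple family $\{d_{S_i}\}$, indexed by tuples $\lambda=(\lambda_1,\dots,\lambda_p)$; writing $R_k=H_k^0=\bigcap_j\ker d_{S_j}$ for the resonant, centralizing part and $NR_k=\bigoplus_{\lambda\neq0}H_k^\lambda$, every block $H_k^\lambda$ with $\lambda\neq0$ carries some invertible $d_{A_{i_0}}$, since $\lambda_{i_0}\neq0$ makes $d_{A_{i_0}}=\lambda_{i_0}\mathrm{Id}+d_{N_{i_0}}$ invertible.

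A Taylor expansion shows that conjugation by $\mathrm{id}+U$, with $U\in NR_k$, leaves the parts of degree $<k$ unchanged and replaces $\phi_i^{(k)}$ by $\phi_i^{(k)}+d_{A_i}(U)$. Writing $\phi_i^{(k),nr}$ for the $NR_k$-component of $\phi_i^{(k)}$ and noting that $d_{A_i}$ preserves $NR_k$, the inductive step amounts to finding a single $U\in NR_k$ with $d_{A_i}(U)=-\phi_i^{(k),nr}$ for every $i$, which then places each new $\phi_i^{(k)}$ in $R_k$. This simultaneous solvability is the crux, and the step I expect to be the main obstacle. Here commutativity is used a second time: expanding $\Phi_i\circ\Phi_j=\Phi_j\circ\Phi_i$ to degree $k$ and invoking the inductive hypothesis, so that all correction terms inherited from degrees $<k$ lie in $R_k$, one obtains, after projecting onto $NR_k$, the cocycle relation $d_{A_i}(\phi_j^{(k),nr})=d_{A_j}(\phi_i^{(k),nr})$ for all $i,j$. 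Because the $d_{A_i}$ pairwise commute and at least one is invertible on each nonzero block, this cocycle is a coboundary: on $H_k^\lambda$ one sets $U=-d_{A_{i_0}}^{-1}\phi_{i_0}^{(k),nr}$, and the cocycle relation together with commutativity of the $d_{A_i}$ forces $d_{A_i}(U)=-\phi_i^{(k),nr}$ for every remaining $i$ as well. The genuinely technical work is the bookkeeping that the degree-$<k$ correction terms are indeed resonant and that the nilpotent contributions $d_{N_i}$ are absorbed correctly.

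Finally, the composition of all the factors $\mathrm{id}+U_k$ converges in the Krull (formal) topology because $U_k$ raises the degree, producing the desired formal $\hat\Psi$ and the normal form $\hat\Phi_i=\hat\Psi^{-1}\circ\Phi_i\circ\hat\Psi$. For the $\rho$-equivariant statement when $\KK=\CC$, I would observe that $\Phi_i\rho=\rho\Phi_i$ makes the anti-holomorphic $\rho$ conjugate the whole system of homological equations into itself, permuting the blocks $H_k^\lambda$ by complex conjugation of $\lambda$ and carrying the solution $U\in NR_k$ to a solution of the same system. Since the solution in $NR_k$ is unique, this forces $\rho\,U\,\rho=U$ at every degree, so each factor $\mathrm{id}+U_k$, and therefore $\hat\Psi$, is $\rho$-equivariant.
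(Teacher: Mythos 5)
Your proposal is correct in substance but follows a genuinely different route from the paper. You run the classical degree-by-degree Poincar\'e--Dulac scheme: split each space $H_k$ of homogeneous maps into joint eigenspaces of the commuting semisimple operators $d_{S_i}$, derive the cocycle relation $d_{A_i}(\phi_j^{(k),nr})=d_{A_j}(\phi_i^{(k),nr})$ from commutativity, and solve the simultaneous homological equations block by block using the invertibility of some $d_{A_{i_0}}$ on each nonzero block; the $\rho$-equivariance then drops out of the uniqueness of the solution in $NR_k$. The paper instead works in the group $\mathcal D^{(\ell)}$ of $\ell$-jets of diffeomorphisms: it applies the Jordan--Chevalley decomposition to the commuting automorphisms $(j_0^\ell\Phi_i)^*$ of the jet algebra $\mathcal E^{(\ell)}$, observes that the semisimple parts come from jets of diffeomorphisms $S_i$, simultaneously linearizes those by picking common eigenvectors, and passes to the inverse limit; there is no homological equation and no cocycle to check, and the $\rho$-equivariance follows from the uniqueness of the Jordan--Chevalley decomposition acting on the chosen eigenvectors. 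Your approach is more elementary and self-contained but carries the bookkeeping burden you correctly identify (verifying that the degree-$<k$ cross terms in $\Phi_i\circ\Phi_j=\Phi_j\circ\Phi_i$ are resonant, which works because compositions of $S_l$-equivariant maps are $S_l$-equivariant); the paper's approach buys conceptual economy at the price of invoking the Jordan--Chevalley theorem in $\mathrm{aut}(\mathcal E^{(\ell)})$. One small inaccuracy to fix: $d_{A_i}\neq d_{S_i}+d_{N_i}$, since $U\mapsto U(A_ix)$ is not additive in $A_i$; the correct statement, which is all you actually use, is that the semisimple part of $d_{A_i}$ on $H_k$ is $d_{S_i}$ (with semisimple factor $U\mapsto U(S_ix)$ of the right-composition operator), so that on a block $H_k^\lambda$ with $\lambda_{i_0}\neq 0$ the operator $d_{A_{i_0}}$ is $\lambda_{i_0}\mathrm{Id}$ plus a commuting nilpotent and hence still invertible.
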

Though the result can be obtained by direct computation, it is easier to understand \cite{Chaperon1986Asterisque} via the Jordan decomposition theorem. For completeness, we provide a proof (in particular, of the $\rho$-equivariant case used in section 5) here. 

\begin{proof}[Idea of a proof]
For each positive integer $\ell$, let $\mathcal{E}^{(\ell)}$ denote the $\KK$ algebra of $\ell$-th order jets (Taylor expansions) $j_0^\ell f$ at $0$ of smooth $\KK$ functions $f$ on $\KK^n$ and let $\mathcal D^{(\ell)}$ be the group of $\ell$-th order jets $j_0^\ell\Phi$ at $0$ of smooth diffeomorphisms $\Phi$  vanishing at $0\in\KK^n$; then, the map which sends $j_0^\ell\Phi
\in\mathcal D^{(\ell)}$  
to 
$(j_0^\ell\Phi)^*:j_0^\ell f\mapsto j_0^\ell(f\circ\Phi)$ is an isomorphism of $\mathcal D^{(\ell)}$ onto the group $\mathrm{aut}(\mathcal E^{(\ell)})$ of automorphisms of $\mathcal E^{(\ell)}$. We may sometimes abuse notations of jets as its representative for simplicity.

Thus, 
$(j_0^\ell\Phi_1)^*,\ldots,(j_0^\ell\Phi_p)^*$ are commuting elements of $\mathrm{aut}(\mathcal E^{(\ell)})$; it follows that their semi-simple parts 
, as endomorphisms of the $\KK$ vector space $\mathcal E^{(\ell)}$, commute  pairwise. Now, it is easy to see that the semi-simple part of an element of $\mathrm{aut}(\mathcal E^{(\ell)})$ lies in $\mathrm{aut}(\mathcal E^{(\ell)})$ by the Jordan-Chevalley theorem; therefore, there exist pairwise commuting elements $j_0^\ell S_i$ of $\mathcal D^{(\ell)}$ such that 
 $(j_0^\ell S_i)^*$ is the semi-simple part of $(j_0^\ell\Phi_i)^*$ for $1\leq i\leq p$.

Then, the following two facts are not difficult to establish:
   \begin{itemize}
  \item one has $j_0^1S_i=\Phi_i^{ss}$ for $1\leq i\leq p$
  \item as the $(j_0^\ell S_i)^*$'s are commuting elements of $\mathrm{aut}(\mathcal E^{(\ell)})$, their semi-simplicity implies, essentially by definition, that the $j_0^\ell S_i$'s can be simultaneously linearized by a formal diffeomorphism $j_0^\ell\Psi$ of order $\ell$. 
\end{itemize}

Indeed, the diffeomorphism $j_0^\ell\Psi$ can be defined through the transformation $(j_0^\ell\Psi)^*$ that normalizes the commuting family $\{(j_0^\ell S_1)^*,\ldots,(j_0^\ell S_p)^*\}$: (after complexification if necessary,) let us take $n$ common eigenvectors $j_0^\ell f_1,\ldots,j_0^\ell f_n$ of $(j_0^\ell S_1)^*,\ldots,(j_0^\ell S_p)^*$ such that $j_0^1 f_1,\ldots,j_0^1 f_n$ form a basis of $\mathcal{E}^{(1)}$ and let us set $(j_0^\ell S_i)^* (j_0^\ell f_m)=\lambda_{im}j_0^\ell f_m $. Let us define $(j_0^\ell\Psi)^*$  by sending $(j_0^\ell\Psi)^*(j_0^\ell(f_m^{(1)}))=j_0^\ell f_m$ for $m=1,\ldots,n$ where $f_m^{(1)}$ denotes the linear part of $f_m$. It follows from the equations $(j_0^\ell S_i)^* (j_0^\ell\Psi)^*\left(j_0^\ell(f_m^{(1)})\right)=\lambda_{im}(j_0^\ell\Psi)^*\left(j_0^\ell(f_m^{(1)})\right)$ that $(j_0^\ell\Psi)^*$ normalizes (that is, diagonalizes or block-diagonalizes) $(j_0^\ell S_i)^*$'s and therefore $j_0^\ell\Psi$ linearizes $j_0^\ell S_i$'s.

This change of coordinates simultaneously transforms the diffeomorphisms $\Phi_i$'s into a Poincar\'e-Dulac normal form to order $\ell$, that is, for all $i,j\in\{1,\ldots,p\}$,
\[
\Phi_i^{ss}\circ j_0^\ell\Phi_j=j_0^\ell\Phi_j\circ\Phi_i^{ss}.
\]
Take the inverse limit $\ell$ tends to $\infty$ and we get a formal transformation $\hat\Psi:=j_0^\infty\Psi=\displaystyle\lim_{\substack{\longleftarrow\\ \ell\rightarrow\infty}} j_0^\ell\Psi$ after which the above equations hold for all natural number $\ell$, i.e., $\Phi_{i}\circ\Phi_{j}^{ss}=\Phi_{j}^{ss}\circ\Phi_{i}$ in the formal sense.

Now assume that $\KK=\CC$ and that the anti-holomorphic involution $\rho$ commutes with all $\Phi_i$'s. As $\rho \tilde{\Phi}_i\rho= \tilde{\Phi}_i$, we have $\rho (j_0^\ell\tilde{\Phi}_i)\rho:=j_0^\ell(\rho \tilde{\Phi}_i\rho)= j_0^\ell\tilde{\Phi}_i$ and then $\left(\rho (j_0^\ell\tilde{\Phi}_i)\rho\right)^*=(j_0^\ell\tilde{\Phi}_i)^*$. It follows from the uniqueness of Jordan-Chevalley decomposition that $(\rho \tilde{S}_i\rho)^*=(\tilde{S}_i)^*$ where $\tilde{S}_i\in\mathcal{D}^{(\ell)}$ and $\tilde{S}_i^*$ is the semi-simple part of $(j_0^{\ell} \tilde{\Phi}_i)^*$ in $Aut(\mathcal{E}^{(\ell)})$. Therefore, for any common eigenvector $j_0^\ell f$ of the $\tilde{S}_i^*$'s, we set $\tilde{S}_i^* j_0^\ell f=\lambda_i j_0^\ell f$. Let $c$ denote the conjugate of complex vectors. Hence, 
$j_0^\ell (c f \rho)=:c (j_0^\ell f) \rho$ is also a common eigenvector of the $\tilde{S}_i^*$'s with respect to the eigenvalue $\bar\lambda_i$. Indeed,  on the one hand, we have
\[
 \tilde{S}_i^* \left(j_0^\ell (c f\rho)\right)
 =(\rho \tilde{S}_i\rho)^* j_0^\ell(c f\rho )
 =j_0^\ell(c f\rho\rho\tilde{S}_i\rho)
 =j_0^\ell(c f\tilde{S}_i\rho),
\]
on the other hand, we have
\[
 \bar\lambda_i j_0^\ell(c f\rho)
 =j_0^\ell(c \lambda_i f\rho)
 =c \left(\lambda_i (j_0^\ell f)\right)\rho
 =c (\tilde{S}_i^*j_0^\ell f)\rho
 =c j_0^\ell( f\tilde{S}_i)\rho
 =j_0^\ell(c f\tilde{S}_i\rho).
\]
Recall that  $(j_0^\ell \Psi)^*$  is defined with the help of eigenvectors $j_0^\ell f_1,\ldots,j_0^\ell f_n$ such that $j_0^1 f_1,\ldots,j_0^1 f_n$ are independent. 
Then one can verify $j_0^\ell(\rho\Psi\rho)=j_0^\ell \Psi$ directly since $(j_0^\ell(\rho\Psi\rho))^*$ also sends $j_0^\ell f_m^{(1)}$ to $j_0^\ell f_m$ for $m=1,\ldots,n$ as $(j_0^\ell \Psi)^*$ does: as
\[
 j_0^\ell(c f_m^{(1)}\rho\Psi)
 =(j_0^\ell \Psi)^* j_0^\ell(c f_m^{(1)}\rho)
 =j_0^\ell(cf_m\rho)
 =c j_0^\ell f_m \rho,
\]
we have
\[
 (j_0^\ell(\rho\Psi\rho))^* j_0^\ell f_m^{(1)}
 =j_0^\ell(f_m^{(1)}\rho\Psi\rho)
 =j_0^\ell(ccf_m^{(1)}\rho\Psi\rho)
 =c j_0^\ell(c f_m^{(1)}\rho\Psi)\rho
 =c (c j_0^\ell f_m \rho) \rho
 =j_0^\ell f_m.
\]
Hence we have $\rho\Psi\rho=\Psi$ by the inverse limit.

\end{proof}

Assuming that the  semi-simple linear part $\Phi_i^{ss}$ of $\Phi_i$ is diagonal, we set
\[
  \Phi_i^{ss}(x_1,\ldots,x_n)=(\mu_{i1}x_1,\ldots,\mu_{in}x_n).
\]
Let us write the homogeneous part of order $\ell$ of $\Phi_j$ as $\Phi_{j}^{(\ell)}=(\phi_{j1}^{(\ell)},\ldots,\phi_{jn}^{(\ell)})$, then we can express  $\Phi_{j}^{(\ell)}\circ\Phi_{i}^{ss}=\Phi_{i}^{ss}\circ\Phi_{j}^{(\ell)}$ in local coordinates, that is, for any $m\in\{1,\ldots,n\}$, we have
\[
\phi_{jm}^{\ell}(\mu_{i1}x_1,\ldots,\mu_{in}x_n)=\mu_{im}\phi_{jm}^{\ell}(x_1,\ldots,x_n).
\]
It follows that for any $j$, the indices $(\gamma_1,\ldots,\gamma_n)$ of every monomial term $x_1^{\gamma_1}\cdots x_n^{\gamma_n}$ in the $m$-th component $\phi_{jm}$ of $\Phi_j$ satisfies the following \textbf{resonant equations} with respect to the $m$-th component
\begin{equation}
\label{eq:21}
 \mu_{im}=\prod_{k=1}^n \mu_{ik}^{\gamma_k},\quad i=1,\ldots,p.
\end{equation}
We denote by  $\mathcal R_m$ the set of solutions $(\gamma_1,\ldots,\gamma_n)$ with $\gamma_k$ natural numbers and $\sum_{k=1}^n\gamma_k\geqslant2$.

Conversely, it is easy to see that the commuting diffeomorphisms $\Phi_i=(\phi_{i1},\ldots,\phi_{in})$ are formally in the Poincar\'e-Dulac normal form if the Taylor expansion of $\phi_{im}$ contains only resonant terms with respect to the $m$-th component, i.e., the indices of every monomial term lie in $\mathcal R_m$.

We now turn to first integrals of a diffeomorphism already in the Poincar\'e-Dulac normal form. The second lemma is also a parallel version from the result for vector fields: A first integral of a vector field in the Poincar\'e-Dulac normal form is also a formal first integral of the semi-simple linear part of the vector field \cite{Walcher1991}.

Let us recall first integral relations for linear diffeomorphisms before stating our lemma. Given a semi-simple linear diffeomorphism $\Phi(x_1,\ldots,x_n)=(\mu_{1}x_1,\ldots,\mu_{n}x_n)$, the equation
\begin{equation}
\label{eq:22}
  \mu_1^{\ell_1}\cdots\mu_n^{\ell_n}=1
\end{equation}
with respect to the non-negative integers $\ell_1,\ldots,\ell_n$ is called the \textbf{first integral equation} for $\Phi$. We denoted by $\Omega$ the set of the solutions of the first integral equation~:
\begin{equation}\label{Omega}
\Omega:=\left\{(\ell_1,\ldots,\ell_n)\in\mathbb{N}^n:\; \mu_1^{\ell_1}\cdots\mu_n^{\ell_n}=1\right\}.
\end{equation}
Hence, $\{x_1^{\ell_1}\cdots x_n^{\ell_n}:(\ell_1,\ldots,\ell_n)\in\Omega\}$ is the set of all monomial first integrals of $\Phi$ up to multiplication by constant coefficients.
For $p$ commuting linear diffeomorphisms, we will consider $p$ first integral equations simultaneously and the set of their common solutions is still denoted by $\Omega$.

\begin{lemma}\label{Lem:NFofFI}
  Assume $\Phi_1,\Phi_2,\ldots,\Phi_p$ are in the Poincar\'e-Dulac normal form, then formal first integrals of the diffeomorphisms are formal first integrals of the semi-simple parts of the diffeomorphisms.
\end{lemma}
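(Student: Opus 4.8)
The plan is to pass to the composition (pullback) operators. Write $T_i := \Phi_i^*$ for the operator $f \mapsto f \circ \Phi_i$ on the algebra of formal power series, and $S_i := (\Phi_i^{ss})^*$. Since $\Phi_i$ is in Poincar\'e--Dulac normal form one has $\Phi_i \circ \Phi_i^{ss} = \Phi_i^{ss}\circ\Phi_i$, and because $(\alpha\circ\beta)^* = \beta^*\alpha^*$ this translates into the commutation $T_i S_i = S_i T_i$. After complexifying and assuming, as we may, that the semi-simple part has been linearized to the diagonal form $\Phi_i^{ss}(x) = (\mu_{i1}x_1,\dots,\mu_{in}x_n)$, the operator $S_i$ is diagonal in the monomial basis, with $S_i x^\gamma = \mu_i^\gamma x^\gamma$ where $\mu_i^\gamma := \prod_k \mu_{ik}^{\gamma_k}$.

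Since a first integral $F$ is common to all the $\Phi_i$, I would fix one index $i$ and prove $F\circ\Phi_i^{ss}=F$; running this over every $i$ gives the lemma. Decompose the formal power series ring into eigenspaces of $S_i$, writing $F = \sum_{\chi} F_\chi$, where $F_\chi$ collects the monomials with $\mu_i^\gamma = \chi$. Because $T_i$ commutes with $S_i$, it preserves each eigenspace, so from $T_i F = F$ one obtains $T_i F_\chi = F_\chi$ for every $\chi$; that is, each spectral component is itself a formal first integral of $\Phi_i$. It therefore suffices to show that $F_\chi = 0$ whenever $\chi \neq 1$.

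Suppose $F_\chi \neq 0$ with $\chi\neq 1$, and let $F_\chi^{(d)}$ be its lowest-degree homogeneous part, of degree $d$. Comparing degree-$d$ terms in $F_\chi\circ\Phi_i = F_\chi$, and using $\Phi_i = A_i x + O(|x|^2)$ so that composition preserves lowest order, yields $F_\chi^{(d)}\circ A_i = F_\chi^{(d)}$, i.e. $A_i^* F_\chi^{(d)} = F_\chi^{(d)}$. Now invoke the multiplicative Jordan decomposition $A_i = A_{i,s}A_{i,u}$, with $A_{i,s}=\Phi_i^{ss}$ semisimple and $A_{i,u}$ unipotent and commuting with $A_{i,s}$: on degree-$d$ polynomials one has $A_i^* = A_{i,u}^* S_i$, with $A_{i,u}^*$ unipotent and commuting with $S_i$. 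Restricted to the $\chi$-eigenspace of $S_i$, which contains $F_\chi^{(d)}$, the operator $A_i^*$ equals $\chi\cdot A_{i,u}^*$ and hence has spectrum $\{\chi\}$. When $\chi\neq 1$ the operator $A_i^* - \mathrm{Id}$ is therefore invertible there, forcing $F_\chi^{(d)} = 0$ and contradicting the minimality of $d$. Hence $F_\chi=0$ for all $\chi\neq 1$, so $S_i F = F$, i.e. $F\circ\Phi_i^{ss}=F$; equivalently, $F$ is supported on monomials $x^\gamma$ with $\mu_i^\gamma=1$, which are exactly the monomial first integrals of $\Phi_i^{ss}$.

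The only genuine obstacle is the degree mixing caused by the higher-order terms of $\Phi_i$, which rules out a naive term-by-term comparison in the identity $F\circ\Phi_i=F$; the device that removes it is the spectral decomposition with respect to $S_i$, whose validity rests precisely on the commutation $T_iS_i=S_iT_i$ furnished by the normal form. The possibility that the linear part $A_i$ is not semisimple is a minor point, absorbed by the observation that the unipotent factor $A_{i,u}^*$ contributes only the eigenvalue $1$ and so does not affect the spectrum argument.
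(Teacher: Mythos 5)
Your proof is correct, but it is organized quite differently from the paper's. The paper proceeds by induction on the degree of the Taylor expansion of $F$: at each step it isolates the homogeneous part $F^{(\ell)}$ via the identity $F^{(\ell)}\circ\Phi_j^{(1)}+(F^{(<\ell)}\circ\Phi_j)^{(\ell)}=F^{(\ell)}$, uses the inductive hypothesis and the commutation $\Phi_j\circ\Phi_i^{ss}=\Phi_i^{ss}\circ\Phi_j$ to show the error term is already a first integral of the $\Phi_i^{ss}$'s, and then kills the non-resonant monomials of $F^{(\ell)}$ by applying the operator $G\mapsto G-G\circ\Phi_i^{ss}$ twice, producing a factor $(1-\mu_i^{\ell})^2$. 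You instead decompose the whole formal power series algebra into eigenspaces of $S_i=(\Phi_i^{ss})^*$, observe that $T_i=\Phi_i^*$ preserves each eigenspace because $T_iS_i=S_iT_i$, and then dispose of each component $F_\chi$ with $\chi\neq 1$ by a single lowest-degree comparison, where the multiplicative Jordan decomposition shows $A_i^*-\mathrm{Id}$ is invertible on the $\chi$-eigenspace. This buys you two things: you avoid the degree-by-degree induction and its bookkeeping entirely, and your argument treats each $\Phi_i$ separately, so it only needs each diffeomorphism to commute with its own semisimple linear part rather than the full set of relations $\Phi_j\circ\Phi_i^{ss}=\Phi_i^{ss}\circ\Phi_j$ that the paper's inductive step invokes. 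The paper's version is more elementary (no spectral decomposition of an infinite-dimensional algebra, only finite-dimensional eigenspace statements degree by degree), and its explicit monomial computation is reused later in the paper (e.g., in Proposition \ref{prop:MonomialAreFI}), which is presumably why the authors chose that route. Both arguments rest on the same underlying fact that an operator commuting with a semisimple one preserves its eigenspaces; you simply apply it once globally where the paper applies it once per degree.
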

\begin{proof}
  Write the semi-simple part of $\Phi_i$ as
  \[
  \Phi_i^{ss}(x_1,\ldots,x_n)=(\mu_{i1}x_1,\ldots,\mu_{in}x_n),
  \]
  then the lemma to prove is that if we consider the Taylor expansion of a first integral of the diffeomorphisms, then the indices of every monomial term lie in $\Omega$ provided that the diffeomorphisms are in the Poincar\'e-Dulac normal form.



  Assume $F$ is a common first integral of the diffeomorphisms and  let $F^{(low)}$ be the homogeneous part of lowest degree of $F$. Consider the homogeneous part of lowest degree of both sides of the equations $F\circ\Phi_j=F$, we have obviously $F^{(low)}\circ\Phi_j^{(1)}=F^{(low)}$. View $\Phi_j^{(1)}$ as a linear operator on the space of homogeneous polynomials of degree \textit{low} mapping $f$ to $f\circ\Phi_j^{(1)}$, then $F^{(low)}$  is in the eigenspace belonging to eigenvalue $1$, and therefore it is in the eigenspace belonging to the eigenvalue $1$ of the semi-simple part $\Phi_j^{ss}$, i.e., $F^{(low)}\circ\Phi_j^{ss}=F^{(low)}$.

  We claim that the homogeneous part of any degree of $F$ is also a common first integral of $\Phi_j^{ss}$. Now assume the claim is true for homogeneous parts of degree less than $\ell$, which means that any monomial term $cx_1^{\ell_{1}}\cdots x_n^{\ell_{n}}$ in $F$ with $\ell_1+\cdots+\ell_n<\ell$ has indices $(\ell_1,\ldots,\ell_n)$ in $\Omega$. Consider the homogeneous part of degree $\ell$ of both sides of $F\circ\Phi_j=F$, we have
  \begin{equation}
  \label{eq:23}
  F^{(\ell)}\circ\Phi_j^{(1)}+(F^{(<\ell)}\circ\Phi_j)^{(\ell)}=F^{(\ell)},
  \end{equation}
  where $F^{(<\ell)}$ denotes the part of $F$ with degree less than $\ell$.
	By our inductive hypothesis, $(F^{(<\ell)}\circ\Phi_j)^{(\ell)}$ is a common first integral of $\Phi_i^{ss}$ since
	$$
	(F^{(<\ell)}\circ\Phi_j)^{(\ell)}\circ \Phi_i^{ss}=(F^{(<\ell)}\circ\Phi_j\circ \Phi_i^{ss})^{(\ell)}= (F^{(<\ell)}\circ\Phi_i^{ss}\circ\Phi_j)^{(\ell)}=(F^{(<\ell)}\circ\Phi_j)^{(\ell)}.
	$$
	Therefore, by \eqref{eq:23} we have
  \[
  (F^{(\ell)}-F^{(\ell)}\circ\Phi_j^{(1)})\circ\Phi_i^{ss}=F^{(\ell)}-F^{(\ell)}\circ\Phi_j^{(1)},
  \]
  equivalently,
  \[
    (F^{\ell}-F^{(\ell)}\circ\Phi_i^{ss})\circ\Phi_j^{(1)}=F^{\ell}-F^{(\ell)}\circ\Phi_i^{ss}.
  \]
  Considering composition by $\Phi_j^{(1)}$ on the right as a linear operator on the space of homogeneous polynomials of degree $\ell$, we have that $F^{(\ell)}-F^{(\ell)}\circ\Phi_i^{ss}$ lies in the eigenspace belonging to the eigenvalue $1$ of $\Phi_j^{(1)}$ and then of $\Phi_j^{ss}$. For any $i,j\in\{1,\ldots,p\}$, we have
  \begin{equation}
  \label{eq:24}
  (F^{(\ell)}-F^{(\ell)}\circ\Phi_i^{ss})\circ\Phi_j^{ss}=F^{(\ell)}-F^{(\ell)}\circ\Phi_i^{ss},
  \end{equation}
 and it  implies that $F^{(\ell)}$ is also a first integral of $\Phi_i^{ss}$ for $i=1,\ldots,p.$
 Indeed, assume on the contrary that a monomial term $cx_1^{\ell_1}\cdots x_n^{\ell_n}$ in $F^{(\ell)}$ is not a first integral of $\Phi_i^{ss}$, then $F^{(\ell)}-F^{(\ell)}\circ\Phi_i^{ss}$ contains a non-vanishing term $c(1-\prod_{k=1}^n \mu_{ik}^{\ell_k})\prod_{k=1}^nx_k^{\ell_k}$, and then $(F^{(\ell)}-F^{(\ell)}\circ\Phi_i^{ss})-(F^{(\ell)}-F^{(\ell)}\circ\Phi_i^{ss})\circ\Phi_i^{ss}$ contains a non-vanishing term
$$
 c(1-\prod_{k=1}^n \mu_{ik}^{\ell_k})^2\prod_{k=1}^nx_k^{\ell_k},
$$
which contradicts with equation \eqref{eq:24}.

\end{proof}

%
%

\begin{definition}
Given a family of $p$ commuting linear vector fields $X_1,\ldots,X_p$ on $(\KK^n,0)$ and assume their semi-simple liner parts read $X_i^{ss}=\sum_{m=1}^n\lambda_{im} x_m\frac{\partial}{\partial x_m}$, we say it is \textbf{weakly resonant} with respect to first integrals if there exists integers $k_1,\ldots,k_n$  such that
\[
 (\sum_{m=1}^n k_m\lambda_{1m},\ldots,\sum_{m=1}^n k_m\lambda_{pm})\in 2\sqrt{-1}\pi\ZZ^p-\{0\}.
\]

We say the family of  commuting linear vector fields to be \textbf{weakly non-resonant} if there do not exist such integers $k_1,\ldots,k_n$.

Given a family of $p$ commuting  diffeomorphisms on $(\KK^n,0)$, we say it is weakly resonant (resp. weakly non-resonant) if the family of infinitesimal generators  of their semi-simple liner parts is (resp. is not).
\end{definition}

We emphasize that the family of $X_1,\ldots,X_p$ can be weakly non-resonant and resonant as well since we could have $(\sum_{m=1}^n k_m\lambda_{1m},\ldots,\sum_{m=1}^n k_m\lambda_{pm})=0$. 
We also remark that our notion of weak resonance with respect to first integrals is slightly different from that with respect to vector fields: the latter requires the existence of integers $k_1,\ldots,k_n$ such that they are no less than $-1$ and there is at most one integer equal to $-1$, see \cite{Li-Llibre-Zhang2002} for example. 

\begin{definition}
Let $\Phi_1,\ldots,\Phi_p$ be $p$ commuting linear diffeomorphisms on $(\KK^n,0)$ and assume the eigenvalues of the semi-simple linear part $\Phi_i^{ss}$ of $\Phi_i$ are $\mu_{i1},\ldots,\mu_{in}$. We say the family of diffeomorphisms is \textbf{projectively hyperbolic} if the $p$ real vectors $(\ln|\mu_{i1}|,\ldots,\ln|\mu_{in}|)$ are $\RR$-linearly independent.
\end{definition}

We recall that the family $\{\Phi_i\}$ is said to be {\it hyperbolic} if any $p$ of the $n$ covectors $(\ln|\mu_{1j}|,\cdots, \ln|\mu_{pj}|)$ are linearly independent (which coincides with the usual meaning if $p = 1$).
By definition, the projection of a projectively hyperbolic family of $p$ linear diffeomorphisms onto some $p$-dimensional subspace is hyperbolic~: the $p$ by $n$ real matrix $(\ln|\mu_{im}|)$ has full rank and therefore there exist $p$ columns, say the $(m_1,\ldots,m_p)$-th columns, which are linearly independent, then the projection of the diffeomorphisms onto the subspace of $(x_{m_1},\ldots,x_{m_p})$ form a hyperbolic family of diffeomorphisms in the sense of definition \ref{def:44}.  Particularly, for a single (linear) diffeomorphism, it is projectively hyperbolic if and only if there exists at least one eigenvalue that does not lie on the unit circle.
\begin{example}
\label{ex:25}
  The diffeomorphism $(\Phi(x,y)=(e^{\sqrt{-1}}x,e^{-\sqrt{-1}}y)$  is not projectively hyperbolic.
\end{example}

With the notions above, we can now state our theorem.

\begin{theorem}
\label{thm:FormalNormalForm}
  Let $(\Phi_1=\Phi,\Phi_2,\ldots,\Phi_p,F_1,\ldots,F_q)$ be a formal non-degenerate discrete integrable system of type $(p,q)$ on $\KK^n$ at a common fixed point, say the origin $0$. Assume that the linear part of $\Phi_i$, at the origin reads $\Phi_i^{(1)}(x_1,\ldots,x_n)=(\mu_{i1}x_1,\ldots,\mu_{in}x_n)$, for all $i=1,\ldots,p$. If the family \{$\Phi_i^{(1)}\}$ is either projectively hyperbolic or infinitesimally integrable with a weakly non-resonant family of generators, then there is formal diffeomorphism, tangent to Identity, which conjugates each diffeomorphisms $\Phi_i$, $i=1,\ldots,p$ to
\begin{equation}
\label{good-nf}
    \hat\Phi_i=(\mu_{i1}x_1(1+\hat\varphi_{i1}),\ldots,\mu_{in}x_n(1+\hat\varphi_{in})).
\end{equation}
  Here, the $\hat\varphi_{ik}$'s are not only common first integrals of $\Phi_i^{ss}$ (this turns $\hat\Phi_i$ into a Poincar\'e-Dulac normal form) but also they satisfy 
	\beq\label{intgnf}
	\prod_{k=1}^n (1+\hat\varphi_{ik})^{\gamma_k}=1
	\eeq 
	for all $(\gamma_1,\ldots,\gamma_n)$ in the set $\Omega$ (defined by \re{Omega}).
\end{theorem}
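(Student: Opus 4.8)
The plan is to reduce the statement to two assertions about the already-normalized family, and to prove each using the first integrals together with the dichotomy hypothesis. First I would apply Lemma~\ref{lem:PD-NF} to obtain a formal, tangent-to-identity transformation bringing $\Phi_1,\dots,\Phi_p$ simultaneously into Poincar\'e--Dulac normal form; after a further linear change I may assume each $\Phi_i^{ss}$ is diagonal, $\Phi_i^{ss}(x)=(\mu_{i1}x_1,\dots,\mu_{in}x_n)$. In these coordinates every monomial $x^\gamma$ occurring in the $m$-th component $\phi_{im}$ is resonant, i.e. $\prod_k\mu_{ik}^{\gamma_k}=\mu_{im}$ for all $i$ (equivalently $\gamma-e_m$ satisfies the first--integral equations \eqref{eq:22}). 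Writing out the target form \eqref{good-nf}, the theorem amounts to two claims: (a) every resonant monomial of degree $\ge 2$ in $\phi_{im}$ is divisible by $x_m$, so that $\phi_{im}=\mu_{im}x_m(1+\hat\varphi_{im})$ with $\hat\varphi_{im}$ a formal series whose exponents $\gamma-e_m$ then lie in $\Omega$, hence $\hat\varphi_{im}$ is a first integral of $\Phi_i^{ss}$; and (b) the product relation \eqref{intgnf}.

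For claim (a) the only point is that a resonant $\gamma$ with $|\gamma|\ge 2$ has $\gamma_m\ge 1$; indeed $\gamma-e_m\in\NN^n$ together with $\prod_k\mu_{ik}^{\gamma_k-\delta_{km}}=1$ gives $\gamma-e_m\in\Omega$ at once. I would argue by contradiction: suppose $\phi_{im}$ contains $c\,x^\gamma$ with $c\ne 0$, $\gamma_m=0$ and $\gamma$ resonant, and show this is incompatible with the existence of $q$ functionally independent common first integrals. Concretely, by Lemma~\ref{Lem:NFofFI} the conjugated first integrals $\hat F_1,\dots,\hat F_q$ are first integrals of the semisimple parts, so their monomials lie in $\Omega$, and functional independence forces $\mathrm{rank}\,\Omega=q$. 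Tracking the action of $\hat\Phi_i$ on the monomial first integrals (as in the computation displayed below) one sees that a nonzero $c$ makes the induced map on the $q$-dimensional space of first--integral variables nontrivial, which contradicts its having $q$ functionally independent invariants. It is exactly here that the dichotomy hypothesis enters: projective hyperbolicity (resp. weak non-resonance of the infinitesimal generators) guarantees that the integer solutions of the resonance relations are pinned down by $\Omega$, so that no degenerate resonance among the $\mu_{ik}$ can conspire to restore independence, and thus that $c=0$.

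For claim (b), having the factored form I compute, for $\beta\in\Omega$,
\[
x^\beta\circ\hat\Phi_i=\Big(\prod_k\mu_{ik}^{\beta_k}\Big)\,x^\beta\prod_k(1+\hat\varphi_{ik})^{\beta_k}=x^\beta\prod_k(1+\hat\varphi_{ik})^{\beta_k},
\]
so \eqref{intgnf} is equivalent to the assertion that every monomial first integral $x^\beta$, $\beta\in\Omega$, is preserved by each $\hat\Phi_i$. Setting $H_i(\beta):=\prod_k(1+\hat\varphi_{ik})^{\beta_k}$, which is multiplicative in $\beta$ and a formal unit $1+O(1)$, it suffices to prove $H_i(\beta)=1$ on a basis $\beta^{(1)},\dots,\beta^{(q)}$ of the lattice $\ZZ\Omega$: multiplicativity then gives $H_i=1$ on a finite-index sublattice, and since the units $1+O(1)$ form a torsion-free group over a field of characteristic $0$, $H_i(\beta)^N=1$ forces $H_i(\beta)=1$ for all $\beta\in\ZZ\Omega\supseteq\Omega$. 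To obtain $H_i(\beta^{(l)})=1$ I would use that $\hat F_1,\dots,\hat F_q$ are functions of the monomial first integrals and are $\hat\Phi_i$-invariant: passing to the variables $y_l=x^{\beta^{(l)}}$, the map $\hat\Phi_i$ induces $y_l\mapsto y_l\,H_i(\beta^{(l)})$, the $\hat F_j$ become $q$ functionally independent invariants of this induced map, and functional independence (which reads $\det(\partial \hat F_j/\partial y_l)\ne 0$) forces the induced map to be the identity, i.e. $H_i(\beta^{(l)})=1$.

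I expect the main obstacle to be the rigorous passage from ``the $\hat F_j$ are $q$ functionally independent invariants'' to the vanishing of the bad coefficients in (a) and the triviality of the induced torus action in (b), since the variables $y_l$ are monomials that need not vanish at the origin and the $\hat F_j$ need not have independent linear parts; I would handle this by a degree-by-degree induction on the homogeneous components of $\hat\Phi_i$ and the $\hat F_j$ rather than by the torus quotient directly. The second delicate point is to make the dichotomy hypothesis do its job in step (a): I would treat the two cases separately, using the matrix $(\ln|\mu_{ik}|)$ in the projectively hyperbolic case and the generator matrix $(\lambda_{ik})$ in the weakly non-resonant case, showing in each instance that the integer solutions of the resonance relations are controlled by $\Omega$, so that a surviving $c\ne 0$ genuinely contradicts $\mathrm{rank}\,\Omega=q$.
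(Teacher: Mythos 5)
Your outline reproduces the paper's architecture (Poincar\'e--Dulac normal form, first integrals descend to the semisimple parts, $\dim\mathit{Vect}_\KK\Omega=q$ under the dichotomy, divisibility of $\phi_{im}$ by $x_m$, invariance of the monomial first integrals), but the order in which you take your two claims creates a circularity that hides the hardest step. Your proof of (a) by contradiction is essentially arithmetic: a resonant $\gamma\in\mathcal R_m$ with $\gamma_m=0$ yields an integer solution $\gamma-e_m$ of the linear relations, and you want this to contradict $\mathrm{rank}\,\Omega=q$. But under the dichotomy $\mathit{Vect}_\KK\Omega$ \emph{is} the full $q$-dimensional solution space of those relations, so $\gamma-e_m$ automatically lies in it; a contradiction only materializes when every element of $\Omega$ has vanishing $m$-th entry, so that the $-1$ in position $m$ is impossible. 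When some $\ell\in\Omega$ has $\ell_m\neq0$ --- e.g. $\Phi(x,y,z)=(2x,\tfrac12 y,z+c\,xy)$, where $(1,1,0)$ is resonant for the third component and lies in $\mathit{Vect}_\KK\Omega$ --- no arithmetic rules out $c\neq0$; one must instead use that $x^{\ell}$ with $\ell_m\neq 0$ is invariant under the full $\Phi_i$ and run a divisibility argument on $\prod_k\phi_{ik}^{\ell_k}=x^\ell$. That invariance is your claim (b), which you prove only after assuming the factored form from (a). The paper breaks the circle by proving the invariance of monomial first integrals (its Proposition 3.2) \emph{before} and \emph{without} divisibility, expanding $G\circ\Phi_i=G\prod_k\bigl(1+\phi_{ik}^{(\geqslant2)}/(\mu_{ik}x_k)\bigr)^{\ell_k}$ as a formal Laurent-type series, and only then deriving the division lemma in two cases.

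Second, the step you defer --- that functional independence of the $\hat F_j$ forces $H_i(\beta)=1$ --- is the actual content of the theorem, and the torus-quotient picture does not survive scrutiny as stated: the monomial invariants $H_1,\dots,H_\tau$ of the linear part number $\tau\geqslant q$ and satisfy only rational multiplicative relations, so ``$\det(\partial\hat F_j/\partial y_l)\neq0$'' is not directly meaningful, and the $\hat F_j$ need not have independent differentials at any fixed finite jet order. The paper's induction identifies the relevant $q\times q$ matrix $M(H_1,\dots,H_\tau)$, shows its determinant equals the nonzero coefficient function appearing in $dF_1^{(low)}\wedge\cdots\wedge dF_q^{(low)}$, and pushes degree by degree through $\eta=\sum_k\ell_k\ln\bigl(1+\phi_{ik}^{(\geqslant2)}/(\mu_{ik}x_k)\bigr)$. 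You correctly anticipate that a degree-by-degree induction is needed, but that induction is where the hypotheses are actually consumed, and it is not supplied. Your lattice reduction of \eqref{intgnf} to a $\QQ$-basis of $\ZZ\Omega$ via torsion-freeness of the group of formal units is correct and slightly slicker than the paper's direct verification, but it only repackages the conclusion once the two missing steps are in place.
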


We give a remark that the diffeomorphism in example \ref{ex:13} is projectively hyperbolic but has no infinitesimally integrable generator; example \ref{ex:25} provides an example of a diffeomorphism which is not projectively hyperbolic but which is infinitesimally integrable with  a weakly non-resonant generator $X=\sqrt{-1}x\frac{\partial}{\partial x}-\sqrt{-1}y\frac{\partial}{\partial y}$;
and the system in example \ref{ex:34} in the next section satisfies none of the two conditions.


\section{Proof of the theorem}
\begin{lemma}
\label{lem:VectOmega}
  Let $(\Phi_1,\Phi_2,\ldots,\Phi_p,F_1,\ldots,F_q)$ be a non-degenerate integrable system in which the diffeomorphisms are in the Poincar\'e-Dulac normal form and their linear parts read $\Phi_i^{(1)}(x_1,\ldots,x_n)=(\mu_{i1}x_1,\ldots,\mu_{in}x_n)$ for $i=1,\ldots,p$. Let $\mathit{Vect}_\KK\Omega$ be the vector space spanned by $\Omega$ over $\KK$. We have the dimension of $\mathit{Vect}_\KK\Omega$ is no less than $q$;  if the family \{$\Phi_i^{(1)}\}$ is either projectively hyperbolic or infinitesimally integrable with a weakly non-resonant family of generators, then the dimension is equal to $q$.
\end{lemma}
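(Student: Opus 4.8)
The plan is to prove the two inequalities $\dim\mathit{Vect}_\KK\Omega\geq q$ and, under either of the additional hypotheses, $\dim\mathit{Vect}_\KK\Omega\leq q$, separately; together they give equality. Throughout I use that $\Omega\subset\NN^n\subset\QQ^n$, so the dimension of the span of $\Omega$ is the same over $\QQ$, $\RR$ and $\CC$; hence I may complexify and compute everything over $\CC$, where each $\Phi_i^{ss}$ is diagonal with eigenvalues $\mu_{i1},\ldots,\mu_{in}$.

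For the lower bound, integrability provides $q$ common first integrals $F_1,\ldots,F_q$ with $dF_1\wedge\cdots\wedge dF_q\neq0$, and by \rl{Lem:NFofFI} each $F_j$ is a formal first integral of the semi-simple parts; thus every monomial $x^{\ell}=x_1^{\ell_1}\cdots x_n^{\ell_n}$ occurring in $F_j$ has exponent $\ell\in\Omega$, and I may write $F_j=\sum_{\ell\in\Omega}c_{j\ell}x^{\ell}$. The key device is the \emph{logarithmic Jacobian} $\tilde J_{jm}:=x_m\,\partial F_j/\partial x_m=\sum_{\ell\in\Omega}c_{j\ell}\,\ell_m\,x^{\ell}$. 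Its $j$-th row equals $\sum_{\ell\in\Omega}(c_{j\ell}x^{\ell})\,\ell$, a $\KK[[x]]$-linear combination of the constant integer vectors $\ell\in\Omega$; hence every row of $\tilde J$ lies in the free $\KK[[x]]$-module $\mathit{Vect}_\KK\Omega\otimes_\KK\KK[[x]]$, whose rank is $d:=\dim\mathit{Vect}_\KK\Omega$. Since $\tilde J$ is the ordinary Jacobian of $(F_1,\ldots,F_q)$ with its columns scaled by the nonzerodivisors $x_m$, the condition $dF_1\wedge\cdots\wedge dF_q\neq0$ forces some $q\times q$ minor of $\tilde J$ to be a nonzero power series, so the $q$ rows are linearly independent over $\mathrm{Frac}(\KK[[x]])$. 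A rank-$d$ free module cannot contain more than $d$ such independent elements, whence $q\leq d$.

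For the upper bound I exhibit, in each case, a $\KK$-linear subspace of dimension $q$ containing $\Omega$ (hence its span), which gives $d\leq q$. Every $\ell\in\Omega$ satisfies $\prod_{m}\mu_{im}^{\ell_m}=1$ for $i=1,\ldots,p$. In the projectively hyperbolic case, taking moduli yields $\prod_m|\mu_{im}|^{\ell_m}=1$, i.e. the $p$ real linear equations $\sum_m\ell_m\ln|\mu_{im}|=0$; projective hyperbolicity says exactly that the $p\times n$ matrix $(\ln|\mu_{im}|)$ has rank $p$, so $\Omega$ lies in the common kernel, of dimension $n-p=q$. In the infinitesimally integrable case, fix the weakly non-resonant generators $X_i=\sum_m\lambda_{im}x_m\,\partial/\partial x_m$ with $e^{\lambda_{im}}=\mu_{im}$; then $\prod_m\mu_{im}^{\ell_m}=1$ reads $\sum_m\ell_m\lambda_{im}\in2\sqrt{-1}\pi\ZZ$ for each $i$, so $\bigl(\sum_m\ell_m\lambda_{1m},\ldots,\sum_m\ell_m\lambda_{pm}\bigr)\in2\sqrt{-1}\pi\ZZ^p$. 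Since the $\ell_m$ are integers, weak non-resonance forbids this vector from being a nonzero element of $2\sqrt{-1}\pi\ZZ^p$, so it vanishes, giving the genuine linear equations $\sum_m\ell_m\lambda_{im}=0$ for all $i$. As the generators $X_i$ form an integrable family they are independent, so the matrix $(\lambda_{im})$ has rank $p$ over $\CC$, and again $\Omega$ lies in a subspace of dimension $n-p=q$.

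The main obstacle is precisely the upper bound in the infinitesimally integrable case. A priori the first-integral relations only place $\Omega$ inside the shifted lattice $\{\ell:\langle\ell,\lambda_i\rangle\in2\sqrt{-1}\pi\ZZ,\ i=1,\ldots,p\}$, whose $\KK$-span could be all of $\KK^n$, so by itself it yields no dimension bound; the whole force of the weak non-resonance hypothesis is to collapse these congruence conditions into the exact linear conditions $\langle\ell,\lambda_i\rangle=0$, which alone control the dimension. The remaining points are bookkeeping: verifying that the logarithmic-Jacobian argument goes through in the purely formal setting (independence of rows over $\mathrm{Frac}(\KK[[x]])$ in place of independence at a generic point), and that passing between $\RR$ and $\CC$ leaves $\dim\mathit{Vect}\,\Omega$ unchanged because $\Omega\subset\NN^n$.
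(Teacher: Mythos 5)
Your proof is correct and follows essentially the same route as the paper's: the upper bound (real parts of $\sum_m\ell_m\ln\mu_{im}=2K_i\sqrt{-1}\pi$ in the projectively hyperbolic case, and weak non-resonance forcing $K_i=0$ in the infinitesimally integrable case, each reducing $\Omega$ to the kernel of a rank-$p$ matrix) is identical, and your lower bound via the logarithmic Jacobian is a module-theoretic repackaging of the paper's argument, which instead extracts from $dP_1\wedge\cdots\wedge dP_q\neq0$ explicit monomials $G_1,\ldots,G_q$ whose exponent vectors are $q$ linearly independent elements of $\Omega$. (The paper works with the homogeneous polynomial first integrals $P_j$ supplied by the non-degeneracy condition rather than with $F_1,\ldots,F_q$ and Lemma~\ref{Lem:NFofFI}, but both yield the same conclusion.)
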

\begin{proof}
  By the non-degeneracy condition, we have homogeneous polynomials $P_1,\ldots,P_q$ which are common first integrals of $\Phi_i^{ss}$ and the corresponding vector fields. Then every monomial term is a common first integral with indices in $\Omega$. 
  As $P_1,\ldots,P_q$ are functionally independent almost everywhere, i.e., $dP_1\wedge\cdots\wedge dP_q\neq0$, there exist monomial terms $G_j=x_1^{\ell_{j1}}\cdots x_n^{\ell_{jn}}$ (ignore coefficient) of $P_j$ such that $dG_1\wedge\cdots\wedge dG_q\neq0$, equivalently,
  \[
   \sum_{1\leqslant k_1<\ldots<k_q\leqslant n}\det
    \begin{pmatrix}
     \frac{\partial G_1}{\partial x_{k_1}}&\cdots&\frac{\partial G_1}{\partial x_{k_q}}\\
     \vdots&&\vdots\\
     \frac{\partial G_q}{\partial x_{k_1}}&\cdots&\frac{\partial G_q}{\partial x_{k_q}}
    \end{pmatrix}
   dx_{k_1}\wedge\cdots dx_{k_q}\neq0.
  \]
  It implies at least one determinant (as coefficient) in the above inequality is nonzero, that is, there exist $k_1<\ldots<k_q$ such that
  $
  \dfrac{G_1\cdots G_q}{x_{k_1}\cdots x_{k_q}}
  \det
  \begin{pmatrix}
     \ell_{1k_1}&\cdots&\ell_{1k_q}\\
     \vdots&&\vdots\\
     \ell_{qk_1}&\cdots&\ell_{qk_q}
    \end{pmatrix}
  \neq0.
  $
  It follows directly that the $q$ elements  $(\ell_{j1},\ldots,\ell_{jn})\in\Omega$ are independent, and therefore the dimension of $\mathit{Vect}_\KK\Omega$ is no less than $q$.

  Suppose $(\ell_1,\ldots,\ell_n)\in\Omega$, then it satisfies the first integral equations \eqref{eq:22}.
  We have integers $K_1,\ldots,K_p$ such that
  \begin{equation}
  \label{eq:31}
    \sum_{m=1}^n\ell_m\ln\mu_{im}=2K_i\sqrt{-1}\pi,\quad i=1,\ldots,p.
  \end{equation}

  If the system is infinitesimally integrable and the family of the infinitesimal generators $X_i=\sum_{m=1}^n\ln\mu_{im}x_m\dfrac{\partial}{\partial x_m}$ is weakly non-resonant, then all $K_i$ in equation \eqref{eq:31} vanish and we get linear equations
  \begin{equation}
  \label{eq:32}
    \sum_{m=1}^n\ell_m\ln\mu_{im}=0,\quad i=1,\ldots,p.
  \end{equation}
  It means that $\Omega$ is contained in the space of solutions of equations \eqref{eq:32}. By the definition of integrability,  we have the $p$ vectors $(\ln\mu_{i1},\ldots,\ln\mu_{in})$ are independent and therefore the space of solutions of \eqref{eq:32} is of dimension $n-p=q$ and therefore the dimension of $\mathit{Vect}_\KK\Omega$ is no more than $q$. Hence, under the assumption of weak non-resonance, the vector space $\mathit{Vect}_\KK\Omega$ is exactly the space of solutions of  \eqref{eq:32} over $\KK$.

  If the system is projectively hyperbolic, we consider the real parts on both sides of equation \eqref{eq:31} and get
  \begin{equation}
  \label{eq:33}
    \sum_{m=1}^n\ell_m\ln|\mu_{im}|=0,\quad i=1,\ldots,p.
  \end{equation}
  It means that $\Omega$ is contained in the space of solutions of equations \eqref{eq:33}. By the very definition of projective hyperbolicity,  the dimension of the space of solutions of \eqref{eq:33} is $n-p=q$ and therefore the dimension of $\mathit{Vect}_\KK\Omega$ is no more than $q$. Hence, under the assumption of projective hyperbolicity, the vector space $\mathit{Vect}_\KK\Omega$ is exactly the space of solutions of  \eqref{eq:33} over $\KK$.

\end{proof}

We remark that the dimension of $\mathit{Vect}_\KK\Omega$ could be bigger than $q$ without weak non-resonance.
for example, the linear diffeomorphism $\Phi(x,y)=({\sqrt{-1}x,-\sqrt{-1}y})$ on $\CC^2$ have monomial first integrals $x^4,\,y^4,\,xy$ and therefore the dimension of $\mathit{Vect}_\KK\Omega$ is $2$. In this case, the diffeomorphism is obviously weakly resonant.

%

\begin{proposition}
\label{prop:MonomialAreFI}

Let $(\Phi_1,\Phi_2,\ldots,\Phi_p,F_1,\ldots,F_q)$ be a formal non-degenerate integrable system in which the diffeomorphisms are in the Poincar\'e-Dulac normal form.  If the family of the linear parts of the diffeomorphisms is either projectively hyperbolic or infinitesimally integrable with a weakly non-resonant family of generators, then the common first integrals of the semi-simple parts of the diffeomorphisms are also first integrals of the (nonlinear) diffeomorphisms.
\end{proposition}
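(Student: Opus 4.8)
The plan is to reduce the statement to monomials and then transfer the invariance of the given first integrals $F_1,\ldots,F_q$ to every monomial first integral by a differential-form argument. First I would observe that, since the semi-simple parts $\Phi_i^{ss}(x)=(\mu_{i1}x_1,\ldots,\mu_{in}x_n)$ are diagonal, a formal power series is a common first integral of the $\Phi_i^{ss}$ if and only if its Taylor support is contained in $\Omega$ (defined in \eqref{Omega}); hence the algebra of such first integrals is spanned by the monomials $x^\ell:=x_1^{\ell_1}\cdots x_n^{\ell_n}$ with $\ell\in\Omega$. Because $G\mapsto G\circ\Phi_i$ is a ring homomorphism, it suffices to prove that $x^\ell\circ\Phi_i=x^\ell$ for every $\ell\in\Omega$ and every $i$.

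Next I would peel off the semi-simple part. Since the $\Phi_i$ are in Poincar\'e--Dulac normal form and, by non-degeneracy, have semi-simple (diagonal) linear part $\Phi_i^{ss}$, each $\Phi_i$ commutes with $\Phi_i^{ss}$, so $U_i:=(\Phi_i^{ss})^{-1}\circ\Phi_i$ is a formal diffeomorphism tangent to the identity (its linear part being $(\Phi_i^{ss})^{-1}\Phi_i^{(1)}=\mathrm{id}$). For $\ell\in\Omega$ one has $x^\ell\circ\Phi_i^{ss}=\big(\prod_m\mu_{im}^{\ell_m}\big)x^\ell=x^\ell$, so $x^\ell\circ\Phi_i=x^\ell\circ U_i$; thus the problem becomes showing that $U_i$ fixes every monomial first integral. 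Writing $U_i=\exp(Y_i)$ for the formal logarithmic vector field $Y_i$, which has vanishing linear part, this is equivalent to $Y_i(x^\ell)=0$. Moreover, since $F_j$ is a first integral of $\Phi_i$ and, by Lemma~\ref{Lem:NFofFI}, also of $\Phi_i^{ss}$, we get $F_j\circ U_i=F_j$, hence $Y_i(F_j)=0$ for $j=1,\ldots,q$.

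The heart of the proof is then a rank count among differentials, and this is where the hypotheses enter, solely through Lemma~\ref{lem:VectOmega}. Working in the module of formal $1$-forms localized at $x_1\cdots x_n$, write $d(x^\ell)=x^\ell\sum_m\ell_m\,\tfrac{dx_m}{x_m}$. As the forms $\tfrac{dx_1}{x_1},\ldots,\tfrac{dx_n}{x_n}$ are a basis over the fraction field, the differentials of all $\Omega$-invariants lie in the subspace $\mathcal V$ of combinations $\sum_m a_m\tfrac{dx_m}{x_m}$ whose coefficient vector $(a_m)$ belongs to $\mathit{Vect}_\KK\Omega$ extended to the fraction field; by Lemma~\ref{lem:VectOmega} this $\mathcal V$ has dimension exactly $q$. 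Both $d(x^\ell)$ and $dF_1,\ldots,dF_q$ lie in $\mathcal V$, so these $q+1$ forms are linearly dependent over the fraction field, i.e.\ $d(x^\ell)\wedge dF_1\wedge\cdots\wedge dF_q=0$. Since the $F_j$ are functionally independent, $dF_1\wedge\cdots\wedge dF_q\neq0$, and I can solve $d(x^\ell)=\sum_{j=1}^q g_j\,dF_j$ with coefficients $g_j$ in the fraction field. Contracting with $Y_i$ gives $Y_i(x^\ell)=\sum_j g_j\,Y_i(F_j)=0$, and as $Y_i(x^\ell)$ is an honest formal power series this vanishing holds in $\KK[[x]]$. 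Therefore $x^\ell\circ U_i=x^\ell$ and $x^\ell\circ\Phi_i=x^\ell$, which is the claim.

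I expect the main obstacle to be the bookkeeping in this differential-form step: one must justify that the wedge $d(x^\ell)\wedge dF_1\wedge\cdots\wedge dF_q$ genuinely vanishes as a formal form (not merely ``almost everywhere''), that the division $d(x^\ell)=\sum_j g_j\,dF_j$ is legitimate over the fraction field, and that contraction with the formal field $Y_i$ commutes with all of this so as to return a power-series identity. The conceptual content --- that there are no more independent monomial first integrals than the $q$ furnished by non-degeneracy --- is entirely supplied by Lemma~\ref{lem:VectOmega}, which is the only place projective hyperbolicity or weak non-resonance is used; that it yields dimension \emph{exactly} $q$ (rather than $\geqslant q$) is precisely what makes the $q+1$ differentials dependent and is indispensable.
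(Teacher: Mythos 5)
Your proof is correct, but it takes a genuinely different route from the paper's. The paper argues by induction on the degree: after invoking Ziglin's lemma to make the lowest-order homogeneous parts of the $F_j$ functionally independent and of a common degree, it extracts at each order a matrix identity such as \eqref{eq:311}, shows the leftmost $q\times q$ matrix is generically invertible by identifying its determinant with the nonzero coefficient of $dF_1^{(low)}\wedge\cdots\wedge dF_q^{(low)}$, and propagates the vanishing to the next order via the logarithms $\xi_m=\ln\bigl(1+\phi_{im}^{(\geqslant2)}/(\mu_{im}x_m)\bigr)$ of the individual multipliers. You instead take the logarithm of the whole unipotent factor $U_i=(\Phi_i^{ss})^{-1}\circ\Phi_i$, obtaining a formal vector field $Y_i$ of order $\geqslant2$ annihilating $F_1,\ldots,F_q$, and conclude with one linear-algebra step in the $q$-dimensional space $\mathcal V$ of logarithmic $1$-forms over the fraction field $K$ of $\KK[[x]]$. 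Both arguments draw the hypotheses solely through Lemma \ref{lem:VectOmega} (that $\dim\mathit{Vect}_\KK\Omega=q$) and rest on Lemma \ref{Lem:NFofFI}; yours dispenses with Ziglin's lemma and with the induction, at the cost of two standard facts worth stating explicitly: a formal diffeomorphism tangent to the identity is $\exp$ of a unique formal vector field of order $\geqslant2$, so that $F\circ U_i=F$ is equivalent to $Y_i(F)=0$ by comparison of lowest-order terms; and the coefficient vector $\bigl(x_m\,\partial F_j/\partial x_m\bigr)_m=\sum_{\ell}c_\ell\,x^\ell\,\ell$ of $dF_j$ lies in $\KK[[x]]\otimes_\KK\mathit{Vect}_\KK\Omega$, which holds because that module is closed in the $\mathfrak m$-adic topology. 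The bookkeeping you flag is harmless: over $K$ the forms $dF_1,\ldots,dF_q$ are linearly independent exactly because their wedge is a nonzero formal $q$-form, so they are a $K$-basis of $\mathcal V$ and the division $d(x^\ell)=\sum_j g_j\,dF_j$ is automatic; contracting with $Y_i$ and noting that $Y_i(x^\ell)\in\KK[[x]]$ vanishes in $K$ finishes the proof.
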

\begin{proof}
According to Lemma \ref{Lem:NFofFI}, formal or analytic first integrals of $\Phi_1,\Phi_2,\ldots,\Phi_p$ are formal or analytic first integrals of the semi-simple parts of the diffeomorphisms provided that the diffeomorphisms are in the Poincar\'e-Dulac normal form. Then any first integral is a series of finitely many monomial generators $G_1,\ldots,G_r$ which have exponents in $\Omega$. And the Lemma \ref{lem:VectOmega} shows that $\Omega$ lies in the $q$-dimensional vector space $\mathit{Vect}_\KK\Omega$ if the family of the linear parts of the diffeomorphisms is either projectively hyperbolic or infinitesimally integrable with a weakly non-resonant family of generators
%

Now we turn to formal integrable system $(\hat\Phi_1,\hat\Phi_2,\ldots,\hat\Phi_p,\hat F_1,\ldots,\hat F_q)$. We assume by Ziglin's lemma \cite{Ziglin1982} that the homogeneous parts  $F_1^{(low_1)},\ldots,F_q^{(low_q)}$ of lowest degree of $\hat F_1,\ldots,\hat F_q$ are functionally independent almost everywhere. For convenience, we use new first integrals $\hat F_1^{\frac{LCM}{low_1}},\ldots,\hat F_q^{\frac{LCM}{low_q}}$ where $LCM$ denotes the least common multiple of $low_1,\ldots,low_q$ so that their lowest degree are all the same. In the following, the new first integrals are still denoted by $\hat F_1,\ldots,\hat F_q$ and their homogeneous parts of lowest degree are denoted by $F_1^{(low)},\ldots,F_q^{(low)}$.

Let $H_k=x_1^{\ell_{k1}}\cdots x_n^{\ell_{kn}}$ for $k=1,\ldots,\tau$ be all monomial first integrals of $\Phi_i^{(1)}$ such that the coefficients are $1$ and $\ell_{k1}+\cdots+\ell_{kn}=low$.
Now write
\[
F_j^{(low)}=c_{j1}H_1+\cdots+c_{j\tau}H_\tau,
\]
where $c_{jk}$ are constants and the rank of the $q$ by $\tau$ matrix $C=(c_{jk})$ is $q\leqslant\tau$ by the functional independence.

As $\Phi_i=(\phi_{i1},\ldots,\phi_{in})$ is in the normal form, then for any monomial first integral $G=x_1^{\ell_1}\cdots x_n^{\ell_n}$ of $\Phi_i^{(1)}(x)=(\mu_{i1}x_1,\ldots,\mu_{in}x_n)$, we have $G=(\mu_{i1}x_1)^{\ell_1}\cdots(\mu_{in}x_n)^{\ell_n}$ and then
\begin{equation}
\label{eq:34}
 G\circ\Phi_i=\phi_{i1}^{\ell_1}\cdots\phi_{in}^{\ell_n}
 =G\left(1+\frac{\phi_{i1}^{(\geqslant2)}}{\mu_{i1}x_1}\right)^{\ell_1}\cdots\left(1+\frac{\phi_{in}^{(\geqslant2)}}{\mu_{in}x_n}\right)^{\ell_n},
\end{equation}
where $\phi_{im}^{(\geqslant2)}$ denotes the nonlinear part of $\phi_{im}$.
It is easy to see that the homogeneous part of degree $\ell_1+\cdots+\ell_n$ of $G\circ\Phi_i$ is $G$ and the homogeneous part of degree $\ell_1+\cdots+\ell_n+1$ is
\begin{equation}
\label{eq:35}
 (G\circ\Phi_i)^{(\ell_1+\cdots+\ell_n+1)}=G\left(\ell_1\dfrac{\phi_{i1}^{(2)}}{\mu_{i1}x_1}+\cdots+\ell_n\dfrac{\phi_{in}^{(2)}}{\mu_{in}x_n}\right).
\end{equation}

As $\hat F_j\circ\Phi_i=\hat F_j$, their homogeneous parts of degree $low+1$ must be the same, i.e., $(\hat F_j\circ\Phi_i)^{(low+1)}=F_j^{(low+1)}$. On the other hand, we have
\[
\begin{aligned}
 (\hat F_j\circ\Phi_i)^{(low+1)}&=(F_j^{(low)}\circ\Phi_i)^{(low+1)}+(F_j^{(low+1)}\circ\Phi_i)^{(low+1)}\\
 &=(F_j^{(low)}\circ\Phi_i)^{(low+1)}+(F_j^{(low+1)}\circ\Phi_i^{(1)})=(F_j^{(low)}\circ\Phi_i)^{(low+1)}+F_j^{(low+1)}.
\end{aligned}
\]
Then we get
\[
(F_j^{(low)}\circ\Phi_i)^{(low+1)}=0.
\]
Substitute $F_j^{(low)}$ by $c_{j1}H_1+\cdots+c_{j\tau}H_\tau$ and use the equation \eqref{eq:35}, we get
\[
 \sum_{m=1}^n \left(\sum_{k=1}^{\tau}c_{jk}\ell_{km}H_k\right)\dfrac{\phi_{im}^{(2)}}{{\mu_{im}x_m}}=0,
 \quad j=1,\ldots,q.
\]
Using matrices, the equations above are equivalent to
\begin{equation}
\label{eq:36}
  \begin{pmatrix}
    c_{11}H_1&\cdots& c_{1\tau}H_\tau\\
    \vdots&&\vdots\\
    c_{q1}H_1&\cdots& c_{q\tau}H_\tau
  \end{pmatrix}_{q\times\tau}
  \begin{pmatrix}
    \ell_{11}&\cdots&\ell_{1n}\\
    \vdots&&\vdots\\
    \ell_{\tau 1}&\cdots&\ell_{\tau n}
  \end{pmatrix}_{\tau\times n}
  \begin{pmatrix}
    \frac{\phi_{i1}^{(2)}}{\mu_{i1}x_1}\\
    \vdots\\
    \frac{\phi_{in}^{(2)}}{\mu_{in}x_n}
  \end{pmatrix}_{n\times1}
  =0.
\end{equation}

Assume that $H_1,\ldots,H_q$ are functionally independent almost everywhere and then for any $k$ in $\{1,\ldots,\tau\}$ we can write $H_k=H_1^{\alpha_{k1}}\cdots H_q^{\alpha_{kq}}$. Equivalently, the $q$ vectors $(\ell_{11},\ldots,\ell_{1n}),\ldots,(\ell_{q1},\ldots,\ell_{qn})$ are linearly independent and for any $k$ in $\{1,\ldots,\tau\}$ we have $(\ell_{k1},\ldots,\ell_{kn})=\sum_{j=1}^q\alpha_{kj}(\ell_{j1},\ldots,\ell_{jn})$. Write the $\tau$ by $q$ matrix $(\alpha_{kj})=
\begin{pmatrix}
   Id_q\\
   B
\end{pmatrix}
$
with $B$ the submatrix consisting of the last $\tau-q$ rows,
then we have
\[
\begin{aligned}
&\quad
  \begin{pmatrix}
    c_{11}H_1&\cdots& c_{1\tau}H_\tau\\
    \vdots&&\vdots\\
    c_{q1}H_1&\cdots& c_{q\tau}H_\tau
  \end{pmatrix}_{q\times\tau}
  \begin{pmatrix}
   Id_q&0\\
   B&Id_{\tau-q}
  \end{pmatrix}
  \begin{pmatrix}
   Id_q&0\\
   -B&Id_{\tau-q}
  \end{pmatrix}
  \begin{pmatrix}
    \ell_{11}&\cdots&\ell_{1n}\\
    \vdots&&\vdots\\
    \ell_{\tau 1}&\cdots&\ell_{\tau n}
  \end{pmatrix}_{\tau\times n}\\
&=
  \begin{pmatrix}
    \displaystyle\sum_{k=1}^\tau\alpha_{k1}c_{1k}H_k&\mkern-5mu\cdots\mkern-5mu&\displaystyle\sum_{k=1}^\tau\alpha_{kq}c_{1k}H_k&c_{1\,q+1}H_{q+1}&\mkern-5mu\cdots\mkern-5mu&c_{1\tau}H_\tau\\
    \vdots&&\vdots&\vdots&&\vdots\\
    \displaystyle\sum_{k=1}^\tau\alpha_{k1}c_{qk}H_k&\mkern-5mu\cdots\mkern-5mu&\displaystyle\sum_{k=1}^\tau\alpha_{kq}c_{qk}H_k&c_{q\,q+1}H_{q+1}&\mkern-5mu\cdots\mkern-5mu&c_{q\tau}H_\tau
  \end{pmatrix}_{q\times\tau}
  \begin{pmatrix}
    \ell_{11}&\cdots&\ell_{1n}\\
    \vdots&&\vdots\\
    \ell_{q1}&\cdots&\ell_{qn}\\
    0&\cdots&0\\
    \vdots&&\vdots\\
    0&\cdots&0
  \end{pmatrix}_{\tau\times n}
 \end{aligned}
\]
and we get from equation \eqref{eq:36} that
\begin{equation}
\label{eq:37}
  \begin{pmatrix}
    \displaystyle\sum_{k=1}^\tau\alpha_{k1}c_{1k}H_k&\mkern-5mu\cdots\mkern-5mu&\displaystyle\sum_{k=1}^\tau\alpha_{kq}c_{1k}H_k\\
    \vdots&&\vdots\\
    \displaystyle\sum_{k=1}^\tau\alpha_{k1}c_{qk}H_k&\mkern-5mu\cdots\mkern-5mu&\displaystyle\sum_{k=1}^\tau\alpha_{kq}c_{qk}H_k
  \end{pmatrix}_{q\times q}
  \mkern-25mu
  \begin{pmatrix}
    \ell_{11}&\cdots&\ell_{1n}\\
    \vdots&&\vdots\\
    \ell_{q1}&\cdots&\ell_{qn}
  \end{pmatrix}_{q\times n}
  \begin{pmatrix}
    \dfrac{\phi_{i1}^{(2)}}{\mu_{i1}x_1}\\
    \vdots\\
    \dfrac{\phi_{in}^{(2)}}{\mu_{in}x_n}
  \end{pmatrix}_{n\times1}
  \mkern-25mu
  =0.
\end{equation}

Now let us compute the explicit expression of $dF_q^{(low)}\wedge\cdots\wedge dF_q^{(low)}$.
\[
\begin{aligned}
  &\qquad dF_q^{(low)}\wedge\cdots\wedge dF_q^{(low)}\\
  &=\sum_{1\leqslant k_1<\cdots<k_q\leqslant\tau}\det
    \begin{pmatrix}
     c_{1\,k_1}&\cdots&c_{1\,k_q}\\
     \vdots&&\vdots\\
     c_{q\,k_1}&\cdots&c_{q\,k_q}
    \end{pmatrix} dH_{k_1}\wedge\cdots\wedge dH_{k_q}\\
  &=\sum_{1\leqslant k_1<\cdots<k_q\leqslant\tau}\mkern-20mu\det\mkern-5mu
    \begin{pmatrix}
     c_{1\,k_1}&\cdots&c_{1\,k_q}\\
     \vdots&&\vdots\\
     c_{q\,k_1}&\cdots&c_{q\,k_q}
    \end{pmatrix}
    \sum_{1\leqslant m_1<\cdots<m_q\leqslant n}\mkern-20mu\det\mkern-5mu
    \begin{pmatrix}
     \ell_{k_1 m_1}&\cdots&\ell_{k_1 m_q}\\
     \vdots&&\vdots\\
     \ell_{k_q m_1}&\cdots&\ell_{k_q m_q}
    \end{pmatrix}\dfrac{H_{k_1}\cdots H_{k_q}}{x_{m_1}\cdots x_{m_q}}dx_{m_1}\wedge\cdots\wedge dx_{m_q}\\
  &=\sum_{1\leqslant m_1<\cdots<m_q\leqslant n}
    \sum_{1\leqslant k_1<\cdots<k_q\leqslant\tau}\mkern-20mu\det
    \{
    \begin{pmatrix}
     c_{1\,k_1}H_{k_1}&\cdots&c_{1\,k_q}H_{k_q}\\
     \vdots&&\vdots\\
     c_{q\,k_1}H_{k_1}&\cdots&c_{q\,k_q}H_{k_q}
    \end{pmatrix}
    \begin{pmatrix}
     \ell_{k_1 m_1}&\cdots&\ell_{k_1 m_q}\\
     \vdots&&\vdots\\
     \ell_{k_q m_1}&\cdots&\ell_{k_q m_q}
    \end{pmatrix}
    \}\dfrac{dx_{m_1}\wedge\cdots\wedge dx_{m_q}}{x_{m_1}\cdots x_{m_q}}
\end{aligned}
\]
Remember $(\ell_{k\,m_1},\ldots,\ell_{k\,m_q})=\sum_{j=1}^{q}\alpha_{kj}(\ell_{j\,m_1},\ldots,\ell_{j\,m_q})$, we have
\[
 \begin{pmatrix}
     \ell_{k_1 m_1}&\cdots&\ell_{k_1 m_q}\\
     \vdots&&\vdots\\
     \ell_{k_q m_1}&\cdots&\ell_{k_q m_q}
 \end{pmatrix}
 =
 \begin{pmatrix}
     \alpha_{k_1\,1}&\cdots&\alpha_{k_1\,q}\\
     \vdots&&\vdots\\
     \alpha_{k_q\,1}&\cdots&\alpha_{k_q\,q}
 \end{pmatrix}
 \begin{pmatrix}
     \ell_{1\,m_1}&\cdots&\ell_{1\,m_q}\\
     \vdots&&\vdots\\
     \ell_{q\,m_1}&\cdots&\ell_{q\,m_q}
 \end{pmatrix},
\]
and therefore we can split the two summations on $1\leqslant m_1<\cdots<m_q\leqslant n$ and $1\leqslant k_1<\cdots<k_q\leqslant\tau$ in the expression of $dF_q^{(low)}\wedge\cdots\wedge dF_q^{(low)}$. Concretely, $dF_q^{(low)}\wedge\cdots\wedge dF_q^{(low)}$ is the product of
\[
\sum_{1\leqslant m_1<\cdots<m_q\leqslant n}
    \dfrac{1}{x_{m_1}\cdots x_{m_q}}
    \det\begin{pmatrix}
            \ell_{1\,m_1}&\cdots&\ell_{1\,m_q}\\
            \vdots&&\vdots\\
            \ell_{q\,m_1}&\cdots&\ell_{q\,m_q}
         \end{pmatrix}
    dx_{m_1}\wedge\cdots\wedge dx_{m_q}
\]
and the homogeneous polynomial function
\[
\sum_{1\leqslant k_1<\cdots<k_q\leqslant\tau}
    \det
    \{
    \begin{pmatrix}
      c_{1\,k_1}H_{k_1}&\cdots&c_{1\,k_q}H_{k_q}\\
      \vdots&&\vdots\\
      c_{q\,k_1}H_{k_1}&\cdots&c_{q\,k_q}H_{k_q}
    \end{pmatrix}
    \begin{pmatrix}
      \alpha_{k_1\,1}&\cdots&\alpha_{k_1\,q}\\
      \vdots&&\vdots\\
      \alpha_{k_q\,1}&\cdots&\alpha_{k_q\,q}
    \end{pmatrix}
    \}.
\]

This polynomial function cannot be zero since $dF_q^{(low)}\wedge\cdots\wedge dF_q^{(low)}\neq0$, and it  equals to the determinant of the leftmost matrix $M(H_1,\ldots,H_\tau)$ in equation \eqref{eq:37}.
In fact, as the determinant is a linear function of each column, we write $\det M(H_1,\ldots,H_\tau)$ as a sum over all $k_1,\ldots,k_q$ from $1$ to $\tau$ of $\tau^q$ determinants
\[
\det
    \begin{pmatrix}
      \alpha_{k_11}c_{1\,k_1}H_{k_1}&\cdots&\alpha_{k_qq}c_{1\,k_q}H_{k_q}\\
      \vdots&&\vdots\\
      \alpha_{k_11}c_{q\,k_1}H_{k_1}&\cdots&\alpha_{k_qq}c_{q\,k_q}H_{k_q}
    \end{pmatrix}
=\alpha_{k_11}\cdots\alpha_{k_qq}\det
    \begin{pmatrix}
      c_{1\,k_1}&\cdots&c_{1\,k_q}\\
      \vdots&&\vdots\\
      c_{q\,k_1}&\cdots&c_{q\,k_q}
    \end{pmatrix}
  H_{k_1}\cdots H_{k_q}
\]
which must vanish if two indices $k_j$ and $k_{j'}$ happen to be equal; fix $q$ pairwise distinct indices $\{k_1,\ldots,k_q\}$ in $\{1,\ldots,\tau\}$ and suppose $k_1<\cdots<k_q$ , then there are $q!$ terms similar to $H_{k_1}\cdots H_{k_q}$  and the sum of them is just
\[
 \begin{aligned}
    P(H_{k_1},\ldots,H_{k_q})
    &:=
    \sum_{\substack{\{k'_1,\ldots,k'_q\}\\ =\{k_1,\ldots,k_q\}}}
    \alpha_{k'_11}\cdots\alpha_{k'_qq}\det
    \begin{pmatrix}
      c_{1\,k'_1}&\cdots&c_{1\,k'_q}\\
      \vdots&&\vdots\\
      c_{q\,k'_1}&\cdots&c_{q\,k'_q}
    \end{pmatrix}
       H_{k_1}\cdots H_{k_q}\\
    &=
    \sum_{\substack{\{k'_1,\ldots,k'_q\}\\ =\{k_1,\ldots,k_q\}}}
    \alpha_{k'_11}\cdots\alpha_{k'_qq}
    \,\,\epsilon(k'_1,\ldots,k'_q)\det
    \begin{pmatrix}
      c_{1\,k_1}&\cdots&c_{1\,k_q}\\
      \vdots&&\vdots\\
      c_{q\,k_1}&\cdots&c_{q\,k_q}
    \end{pmatrix}
       H_{k_1}\cdots H_{k_q}\\
    &=\det
    \begin{pmatrix}
      \alpha_{k_1\,1}&\cdots&\alpha_{k_1\,q}\\
      \vdots&&\vdots\\
      \alpha_{k_q\,1}&\cdots&\alpha_{k_q\,q}
    \end{pmatrix}
    \det
    \begin{pmatrix}
      c_{1\,k_1}&\cdots&c_{1\,k_q}\\
      \vdots&&\vdots\\
      c_{q\,k_1}&\cdots&c_{q\,k_q}
    \end{pmatrix}
    H_{k_1}\cdots H_{k_q},
 \end{aligned}
\]
in which $\epsilon(k'_1,\ldots,k'_q)=\pm1$ is the sign of the permutation $(k'_1,\ldots,k'_q)\mapsto(k_1,\ldots,k_q)$. Hence $\det M(H_1,\ldots,H_\tau)=\sum_{1\leqslant k_1<\cdots<k_q\leqslant\tau}P(H_{k_1},\ldots,H_{k_q})$.

Back to equation \eqref{eq:37}, as the matrix $M(H_1,\ldots,H_\tau)$ is invertible almost everywhere, it follows that, almost everywhere, we have
\[
  \begin{pmatrix}
    \ell_{11}&\cdots&\ell_{1n}\\
    \vdots&&\vdots\\
    \ell_{q1}&\cdots&\ell_{qn}
  \end{pmatrix}_{q\times n}
  \begin{pmatrix}
    \frac{\phi_{i1}^{(2)}}{\mu_{i1}x_1}\\
    \vdots\\
    \frac{\phi_{in}^{(2)}}{\mu_{in}x_n}
  \end{pmatrix}_{n\times1}
  =0.
\]
It follows by Lemma \ref{lem:VectOmega} that, as polynomial functions,
\begin{equation}
\label{eq:38}
G\left(\ell_1 \frac{\phi_{i1}^{(2)}}{\mu_{i1}x_1}+\cdots+\ell_n \frac{\phi_{in}^{(2)}}{\mu_{in}x_n}\right)=0, \quad \forall G=x_1^{\ell_1}\cdots x_n^{\ell_n} \mbox{~with~} (\ell_1,\ldots,\ell_n)\in\Omega.
\end{equation}
In other words, the homogeneous part of degree $\ell_1+\cdots+\ell_n+1$ of $G\circ\Phi_i$ vanishes for any common monomial first integral $G=x_1^{\ell_1}\cdots x_n^{\ell_n}$ of $\Phi_i^{(1)}$ by equation \eqref{eq:35}.

We will show by induction that the homogeneous part of $G\circ\Phi_i$ with degree larger than $\ell_1+\cdots+\ell_n$ also vanishes for  $G=x_1^{\ell_1}\cdots x_n^{\ell_n}$, then we can say that any monomial first integral of $\Phi_i^{(1)}$ is a first integral of $\Phi_i$. Assume the statement is true up to degree $\ell_1+\cdots+\ell_n+\sigma$, $\sigma>0$; it  follows naturally that for any homogeneous polynomial first integral $F^{(\ell)}$, the homogeneous parts up to degree $\ell+\sigma$ of $F^{(\ell)}\circ\Phi_i$ all vanish.

Let $\xi_m=\ln(1+\dfrac{\phi_{im}^{(\geqslant2)}}{\mu_{im}x_m})$ and $\eta=\ln\left((1+\frac{\phi_{i1}^{(\geqslant2)}}{\mu_{i1}x_1})^{\ell_1}\cdots(1+\frac{\phi_{in}^{(\geqslant2)}}{\mu_{in}x_n})^{\ell_n}\right)
=\ell_1\xi_1+\cdots+\ell_n\xi_n$. Use the convention that the degree with respect to $x$ of $\dfrac{\phi_{im}^{(s)}}{\mu_{im}x_m}$ is $s-1$ and rewrite $\eta=\eta^{(1)}+\eta^{(2)}+\cdots$ where $\eta^{(s)}$ denotes the homogeneous part of degree $s$ with respect to $x$. Rewrite equation \eqref{eq:34} as $G\circ\Phi_i=Ge^\eta=G(1+\eta+\frac{1}{2}\eta^2+\cdots)$ for those $\eta$ with $|\eta|<\infty$, that is when $x$ does not belong to the union of hyperplane coordinates.
By our assumption, we get that every homogeneous part of degree no more than $\sigma$ in $(\eta+\frac{1}{2}\eta^2+\cdots)$ must vanish. Then we have $\eta^{(1)}=\eta^{(2)}=\cdots=\eta^{(\sigma)}=0$ because for any $s$, we have
\[
 (\eta+\frac{1}{2}\eta^2+\cdots)^{(s)}=\sum_{t=1}^s\sum_{s_1+\cdots+s_t=s}c_{s_1\cdots s_t}\eta^{(s_1)}\cdots \eta^{(s_t)},
\]
in which $c_{s_1\cdots s_t}$ are constants; and therefore the degree of the first possibly nonvanishing homogeneous part of $(\eta+\frac{1}{2}\eta^2+\cdots)$ must larger than $\sigma$.

For degree $\sigma+1$, we have
\[
 (\eta+\frac{1}{2}\eta^2+\cdots)^{(\sigma+1)}=\sum_{t=1}^{\sigma+1}\sum_{s_1+\cdots+s_t=\sigma+1}c_{s_1\cdots s_t}\eta^{(s_1)}\cdots \eta^{(s_t)}=\eta^{(\sigma+1)}.
\]
We get that the homogeneous part of degree $\ell_1+\cdots+\ell_n+\sigma+1$ of $G\circ\Phi_i$ is just $G\eta^{(\sigma+1)}$, which reads
\begin{equation}
\label{eq:39}
\begin{aligned}
  (G\circ\Phi_i)^{(\ell_1+\cdots+\ell_n+\sigma+1)}&=G(\ell_1\left(\ln(1+\frac{\phi_{i1}^{(\geqslant2)}}{\mu_{i1}x_1})\right)^{(\sigma+1)}+\cdots+\ell_n\left(\ln(1+\frac{\phi_{in}^{(\geqslant2)}}{\mu_{in}x_n})\right)^{(\sigma+1)})\\
  &=G(\ell_1\xi_1^{(\sigma+1)}+\cdots+\ell_n\xi_n^{(\sigma+1)}).
\end{aligned}
\end{equation}

Now consider the homogeneous part $(\hat F_j\circ\Phi_i)^{(low+\sigma+1)}$ of degree $low+\sigma+1$ of $\hat F_j\circ\Phi_i$, which is
\begin{equation}
\label{eq:310}
(F_j^{(low)}\circ\Phi_i)^{(low+\sigma+1)}+\sum_{s=1}^{\sigma}(F_j^{(low+s)}\circ\Phi_i)^{(low+\sigma+1)}+(F_j^{(low+\sigma+1)}\circ\Phi_i)^{(low+\sigma+1)}.
\end{equation}
We recall that, by \ref{Lem:NFofFI}, $F_j^{(low+s)}$ is a common homogeneous polynomial first integral of the $\Phi^{(1)}_j$'s. By our inductive hypothesis, the $\sigma$ terms $(F_j^{(low+s)}\circ\Phi_i)^{(low+\sigma+1)}$ in the middle of equation \eqref{eq:310} vanish; the last term $(F_j^{(low+\sigma+1)}\circ\Phi_i)^{(low+\sigma+1)}$ is just $(F_j^{(low+\sigma+1)}\circ\Phi_i^{(1)})=F_j^{(low+\sigma+1)}$. Hence we get from $(\hat F_j\circ\Phi_i)^{(low+\sigma+1)}=F_j^{(low+\sigma+1)}$ that the first term in equation \eqref{eq:310} vanishes, i.e.,
\[
 (F_j^{(low)}\circ\Phi_i)^{(low+\sigma+1)}=0.
\]
Substitute $F_j^{(low)}$ by $c_{j1}H_1+\cdots+c_{j\tau}H_\tau$ and use equation \eqref{eq:39}, we have
\[
\begin{aligned}
 &c_{j1}(H_1\circ\Phi_i)^{(low+\sigma+1)}+\cdots+c_{j\tau}(H_\tau\circ\Phi_i)^{(low+\sigma+1)}\\
 =&\,c_{j1}H_1(\ell_{11}\xi_1^{(\sigma+1)}+\cdots+\ell_{1n}\xi_n^{(\sigma+1)})+\cdots+c_{j\tau}H_\tau(\ell_{\tau 1}\xi_1^{(\sigma+1)}+\cdots+\ell_{\tau n}\xi_n^{(\sigma+1)})=0,
\end{aligned}
\]
that is,
\[
 \sum_{m=1}^n \left(\sum_{k=1}^{\tau}c_{jk}\ell_{km}H_k\right)\xi_m^{(\sigma+1)}=0,
 \quad j=1,\ldots,q.
\]

Using matrix and similar to equation \eqref{eq:36}, the above equations
\begin{equation}
\label{eq:311}
  \begin{pmatrix}
    c_{11}H_1&\cdots& c_{1\tau}H_\tau\\
    \vdots&&\vdots\\
    c_{q1}H_1&\cdots& c_{q\tau}H_\tau
  \end{pmatrix}_{q\times\tau}
  \begin{pmatrix}
    \ell_{11}&\cdots&\ell_{1n}\\
    \vdots&&\vdots\\
    \ell_{\tau 1}&\cdots&\ell_{\tau n}
  \end{pmatrix}_{\tau\times n}
  \begin{pmatrix}
    \xi_1^{(\sigma+1)}\\
    \vdots\\
    \xi_n^{(\sigma+1)}
  \end{pmatrix}_{n\times1}
  =0.
\end{equation}
Apply the same argument  from equation \eqref{eq:36} to equation \eqref{eq:38}, we can get by equation \eqref{eq:311} that
\begin{equation}
\label{eq:312}
 \ell_1\xi_1^{(\sigma+1)}+\cdots+\ell_n\xi_n^{(\sigma+1)}=0, \quad \mbox{~for all~} (\ell_1,\ldots,\ell_n)\in\Omega.
\end{equation}
Take equation \eqref{eq:312} back to equation \eqref{eq:39}, we get that the homogeneous part of degree $\ell_1+\cdots+\ell_n+\sigma+1$ of $G\circ\Phi_i$ vanishes. We finish our inductive step.


\end{proof}

\color{black}

\begin{lemma}[Division Lemma]
\label{lem:Division}
  Let $(\Phi_1,\Phi_2,\ldots,\Phi_p,F_1,\ldots,F_q)$ be a non-degenerate integrable system of type $(p,q)$ such that the diffeomorphisms are in Poincar\'e-Dulac normal form. Write $\Phi_i=(\phi_{i1},\ldots,\phi_{in})$ for $i=1,\ldots,p$. If the family \{$\Phi_i^{(1)}\}$ is either projectively hyperbolic or infinitesimally integrable with a weakly non-resonant family of generators, then we have $\phi_{im}$ is divisible by $x_m$ for $m=1,\ldots,n.$
\end{lemma}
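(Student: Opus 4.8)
The plan is to work monomial by monomial. Since $\Phi_i=(\phi_{i1},\dots,\phi_{in})$ is in Poincar\'e--Dulac normal form and $\Phi_i^{(1)}$ is diagonal, every monomial $x^\gamma=x_1^{\gamma_1}\cdots x_n^{\gamma_n}$ occurring in $\phi_{im}$ satisfies the resonance equations \eqref{eq:21}, namely $\prod_{k=1}^n\mu_{i'k}^{\gamma_k}=\mu_{i'm}$ for all $i'=1,\dots,p$. Divisibility of $\phi_{im}$ by $x_m$ is equivalent to the assertion that every such $\gamma$ has $\gamma_m\ge 1$, so I fix $m$ and show that a monomial with $\gamma_m=0$ cannot occur. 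The argument splits according to whether $\Omega$ meets the hyperplane $\{\ell_m\ne 0\}$, and the two cases use genuinely different inputs.

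First case: some $\ell\in\Omega$ has $\ell_m>0$. Then $x^\ell$ is a common monomial first integral of the $\Phi_{i}^{(1)}$, and by Proposition \ref{prop:MonomialAreFI} it is in fact a first integral of the nonlinear $\Phi_i$, i.e. $\prod_{k=1}^n\phi_{ik}^{\ell_k}=\prod_{k=1}^n x_k^{\ell_k}$ as formal series. I would then introduce the $x_m$-adic valuation $v_m(f)$ (the least power of $x_m$ among the monomials of $f$), which is additive on products since $\KK[[x]]$ is an integral domain (the product of the two $x_m$-leading coefficients, elements of the domain $\KK[[x_k:k\ne m]]$, does not vanish). Because each $\phi_{ik}=\mu_{ik}x_k+\cdots$ with $\mu_{ik}\ne 0$, one has $v_m(\phi_{ik})=0$ for $k\ne m$ and $v_m(\phi_{im})\in\{0,1\}$, the value $1$ being exactly divisibility by $x_m$. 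Applying $v_m$ to the first-integral identity gives $\ell_m\,v_m(\phi_{im})=\ell_m$, and since $\ell_m>0$ this forces $v_m(\phi_{im})=1$, as desired.

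Second case: every $\ell\in\Omega$ has $\ell_m=0$, i.e. $\mathit{Vect}_\KK\Omega\subseteq\{\ell_m=0\}=e_m^{\perp}$. Here I invoke Lemma \ref{lem:VectOmega}: under either hypothesis $\mathit{Vect}_\KK\Omega$ is exactly the solution space of the linear system \eqref{eq:33} (projectively hyperbolic case) or \eqref{eq:32} (weakly non-resonant case), that is $\mathit{Vect}_\KK\Omega=W^{\perp}$ where $W$ is spanned by the $p$ covectors $w_{i'}=(\ln|\mu_{i'1}|,\dots,\ln|\mu_{i'n}|)$, resp. $w_{i'}=(\ln\mu_{i'1},\dots,\ln\mu_{i'n})$. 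The case assumption then reads $W^{\perp}\subseteq e_m^{\perp}$, hence $e_m\in(W^{\perp})^{\perp}=W$, giving scalars $a_{i'}$ with $\sum_{i'}a_{i'}w_{i'}=e_m$. Now suppose $\phi_{im}$ contained a monomial with $\gamma_m=0$; taking moduli (hyperbolic case) or using weak non-resonance to kill the $2\pi\sqrt{-1}\,\ZZ$ ambiguity of the logarithms (non-resonant case) turns the resonance relation into $\sum_k\gamma_k(w_{i'})_k=(w_{i'})_m$ for all $i'$. Pairing with $\sum_{i'}a_{i'}w_{i'}=e_m$ yields $0=\gamma_m=1$, a contradiction; hence no such monomial exists.

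The main obstacle is the first case: the resonance relations \eqref{eq:21} by themselves do \emph{not} forbid monomials with $\gamma_m=0$ (one can exhibit projectively hyperbolic integrable systems where \eqref{eq:21} admits such solutions), so the divisibility is a genuinely nonlinear phenomenon that must be extracted from the fact that monomial first integrals of the linear parts remain first integrals of the full diffeomorphisms --- precisely the content of Proposition \ref{prop:MonomialAreFI} --- via the valuation count. The second case, by contrast, is purely linear-algebraic, but it relies essentially on the sharp equality $\dim \mathit{Vect}_\KK\Omega=q$ of Lemma \ref{lem:VectOmega} to produce the relation $e_m\in W$.
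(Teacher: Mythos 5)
Your proof is correct and follows essentially the same route as the paper: the same two-case split on whether $\Omega$ meets $\{\ell_m\neq 0\}$, with Case 1 extracted from the nonlinear first-integral identity $\prod_k\phi_{ik}^{\ell_k}=x^{\ell}$ (Proposition \ref{prop:MonomialAreFI}) by a divisibility count in $x_m$, and Case 2 from the resonance relations plus the sharp equality $\dim\mathit{Vect}_\KK\Omega=q$ of Lemma \ref{lem:VectOmega}. Your valuation phrasing of Case 1 and the duality phrasing $e_m\in(W^{\perp})^{\perp}=W$ of Case 2 are only cosmetic repackagings of the paper's direct divisibility argument and its ``$q+1$ independent solutions'' contradiction.
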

\begin{proof}
  There are two cases according to different positions of the vector space $\mathit{Vect}_\KK\Omega$.

  Case 1: the vector space $\mathit{Vect}_\KK\Omega$ is not contained in any hyperplane.  In this case, for any $m$,  there exists an element $(\ell_1,\ldots,\ell_n)\in\Omega$ such that $\ell_m\neq0$. The equation $\prod_{k=1}^n \phi_{ik}^{\ell_k}=x_1^{\ell_1}\cdots x_n^{\ell_n}$ implies that $\prod_{k=1}^n \phi_{ik}^{\ell_k}$ is divisible by $x_m^{\ell_m}$. On the other hand, as the linear part of $\phi_{ik}$ is $\mu_{ik}x_k$, we get $\prod_{k\neq m}\phi_{ik}$ is not divisible by $x_m$ since its homogeneous part of lowest degree is $\prod_{k\neq m}\mu_{ik}x_k$. Hence, $\prod_{k\neq m} \phi_{ik}^{\ell_k}$ is not divisible by $x_m$ neither. Hence $\phi_{im}$ is divisible by $x_m$.

  Case 2: the vector space $\mathit{Vect}_\KK\Omega$ is contained in a hyperplane. Assume for any $(\ell_1,\ldots,\ell_n)\in\Omega$ we have $\ell_m=0$ and and $x_1^{\gamma_1}\cdots x_n^{\gamma_n}$ is a term of $\phi_{im}$ with $\gamma_m=0$. we have the indices $(\gamma_1,\ldots,\gamma_n)$ lie in $\mathcal R_m$ which satisfy the resonant equations \eqref{eq:21}. Then We have integers $K_1,\ldots,K_n$ such that
  \begin{equation}
  \label{eq:313}
    \ln\mu_{im}=\sum_{k=1}^n\gamma_k\ln\mu_{ik}+2K_i\pi\sqrt{-1},\quad i=1,\ldots,p.
  \end{equation}

  If the system is infinitesimally integrable and the family of the infinitesimal generators  $X_i=\sum_{m=1}^n\ln\mu_{im}x_m\frac{\partial}{\partial x_m}$ is weakly non-resonant, then we have $K_i=0$ for all $i$ and $$(\gamma_1,\ldots,\gamma_{m-1},-1,\gamma_{m+1},\ldots,\gamma_n)$$ is an integer solution of the equations
  \begin{equation}
  \label{eq:314}
    \sum_{k=1}^n\gamma_k\ln\mu_{ik}=0,\quad i=1,\ldots,p.
  \end{equation}
  As its $m$-th component is nonzero and therefore it cannot be expressed by a  linear combination of elements in $\Omega$, then we can get $q+1$ independent solutions of \eqref{eq:314} which contradicts with that the dimension of $\mathit{Vect}_\KK\Omega$ equals to $q$.

  If the system is projectively hyperbolic, then we consider the real parts on both sides of equation \eqref{eq:313}
   \[
   \ln|\mu_{im}|-\sum_{k=1}^n \gamma_k\ln|\mu_{ik}|=0,\quad i=1,\ldots,p.
   \]
   We can see that $(\gamma_1,\ldots,\gamma_{m-1},-1,\gamma_{m+1},\ldots,\gamma_n)$ is an integer solution of the equations
   \begin{equation}
   \label{eq:315}
     \sum_{k=1}^n \gamma_k\ln|\mu_{ik}|=0,\quad i=1,\ldots,p.
   \end{equation}
   Then the dimension of solutions of \eqref{eq:315} is larger than $q$ and so is that of $\mathit{Vect}_\KK\Omega$, which contradicts with that the dimension of $\mathit{Vect}_\KK\Omega$ equals to $q$.

   Hence, under the assumption of weak non-resonance or projective hyperbolicity, we have for every term $x_1^{\gamma_1}\cdots x_n^{\gamma_n}$ of $\phi_{im}$, its $m$-th exponent $\gamma_m>0$.
\end{proof}

We point out that our hypothesis is necessary.
\begin{example}
\label{ex:34}
Consider two commuting diffeomorphisms on $(\CC^2,0)$
\[
 \Phi_1(x,y)=(2x,4y+x^2)\quad\text{and}\quad\Phi_2(x,y)=(-3x,9y).
\]
The commuting diffeomorphisms are in the Poincar\'e-Dulac normal forms but they can not be put into normal forms stated in theorem \ref{thm:FormalNormalForm}. In this case, the integrable system without common first integrals is neither weakly non-resonant nor projectively hyperbolic.
\end{example}

We also note that if  $\Omega$ admits, say, only the first $p'$ entries are nonzero, which means the last $n-p'$ elements $\ell_{p'+1},\ldots,\ell_n$ must be zero, then the first integrals are independent of $x_{p'+1},\ldots,x_n$ by lemma \ref{Lem:NFofFI}. Moreover, we have all $\phi_{im}$ with $m\leqslant p'$ are independent of $x_{p'+1},\ldots,x_n$. In fact, we just proved that such $\phi_{im}$ is divisible by $x_m$, then by equation \eqref{eq:21}, the indices of the quotients of monomial terms in $\phi_{im}$ and $x_m$ also lie in $\Omega$, hence, the last $n-p'$ indices of every monomial term  in $\phi_{im}$ must be zero. Hence, consider projections of $\Phi_1,\ldots,\Phi_n$ to the plane of first $p'$ coordinates, then any $p'$ independent of them as diffeomorphisms on the coordinate plane together with the $q$ first integrals as functions on the coordinate plane form an integrable system of type $(p',q)$.

\noindent\textbf{End of the proof of theorem \ref{thm:FormalNormalForm}}

By the division lemma \ref{lem:Division}, there exist functions $\varphi_{im}$ such that $\phi_{im}=\mu_{im}x_m(1+\hat\varphi_{im})$ for all $i$ and all $m$. By proposition \ref{prop:MonomialAreFI}, we have $(x_1^{\gamma_1}\cdots x_n^{\gamma_n})\circ\Phi_i= x_1^{\gamma_1}\cdots x_n^{\gamma_n}$ for every $(\gamma_1,\ldots,\gamma_n)$ in $\Omega$, and after substitutions of $\phi_{im}$ and a reduction,  we get $\prod_{k=1}^n (1+\varphi_{ik})^{\gamma_k}=1$.

Notice the relation between $\mathcal R_m$ and $\Omega$ given by \eqref{eq:21} and \eqref{eq:22} respectively, every term of $\phi_{im}$ whose indices lie in $\mathcal R_m$ is a product of $x_m$ and a term of $\varphi_{im}$ whose indices lie in $\Omega$, so $\varphi_{im}$ are first integrals of $\Phi_j^{ss}$.

\section{Cases in analytic and smooth category}
\subsection*{\textit{Analytic case}}
For analytic integrable diffeomorphisms, we pay attention to the systems of the Poincar\'e type.
\begin{definition}(\cite{Gong-Stolovitch2016}[Definition 4.11])
  Let $\Phi_1,\ldots,\Phi_p$ be $p$ commuting diffeomorphisms and $(\mu_{i1},\ldots,\mu_{in})$ be the eigenvalues of the linear part of $\Phi_i$. We say that the family of the diffeomorphisms (or their linear part) is of \textbf{the Poincar\'e type} if there exist $d>1$ and $c>0$ such that, for each
  $(s_1,\ldots,s_n)\not\in\mathcal{R}_m$,
  there exists $\left(i', (s'_1,\ldots,s'_n)\right) \in\{1, \ldots, p\} \times \NN^n$ such that $\mu_{i1}^{s'_1}\cdots\mu_{in}^{s'_n}=\mu_{i1}^{s_1}\cdots\mu_{in}^{s_n}$ for all $1 \leq i \leq p, \mu_{i1}^{s'_1}\cdots\mu_{in}^{s'_n}-\mu_{i' m} \neq 0$, and
  $$
   \max \left(\left|\mu_{i'1}^{s'_1}\cdots\mu_{i'n}^{s'_n}\right|,\left|\mu_{i'1}^{s'_1}\cdots\mu_{i'n}^{s'_n}\right|^{-1}\right)>c^{-1} d^{s'_1+\cdots+s'_n},\,\, (s'_1-s_1,\ldots,s'_n-s_n) \in \NN^{n} \cup\left(-\NN^{n}\right).
  $$

\end{definition}

By a theorem of X. Gong and L. Stolovitch \cite{Gong-Stolovitch2016}[Theorem 4.13], which says that if a commutative family of finitely many germs of biholomorphisms of the Poincar\'e type is formally conjugate to the normal form \eqref{good-nf} satisfying \eqref{intgnf}, then it is holomorphically conjugate to the normal form, we get the following theorem.

\begin{theorem}
  Let $(\Phi_1,\ldots,\Phi_p,F_1,\ldots,F_q)$ be a non-degenerate analytic integrable system of type $(p,q)$ on $\KK^n$ around $0$ satisfying the condition in theorem \ref{thm:FormalNormalForm}. If the family of diffeomorphisms is of Poincar\'e type, then the system is analytically conjugate to the normal form \ref{good-nf} together with \ref{intgnf} as in theorem \ref{thm:FormalNormalForm}., i.e., the normalization is convergent.
\end{theorem}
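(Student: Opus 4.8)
The plan is to obtain this statement as a direct combination of the formal normalization of \rt{thm:FormalNormalForm} with the analytic normalization criterion of Gong--Stolovitch quoted just above. Since an analytic system is in particular a formal one, and since the hypotheses (non-degeneracy, type $(p,q)$, and either projective hyperbolicity or infinitesimal integrability with a weakly non-resonant family of generators) are exactly those of \rt{thm:FormalNormalForm}, I would first apply that theorem. After a preliminary linear change of coordinates diagonalizing the common semi-simple linear part (allowed by the non-degeneracy assumption, over $\CC$ if necessary), this yields a formal transformation $\hat\Psi$, tangent to the identity, conjugating each $\Phi_i$ to the normal form \eqref{good-nf} whose coefficients satisfy the integrability constraint \eqref{intgnf}.

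The second step is to feed this formal conjugacy into \cite{Gong-Stolovitch2016}[Theorem 4.13]. By hypothesis $\{\Phi_i\}$ is a commutative family of finitely many germs of biholomorphisms of the Poincar\'e type, and we have just exhibited a formal conjugacy to a normal form of the shape \eqref{good-nf} subject to \eqref{intgnf}; this is precisely the situation covered by that theorem, which promotes such a formal conjugacy to a convergent (holomorphic) one. Hence there is an analytic transformation, tangent to the identity, realizing the conjugacy, and the normalization converges. The only verification needed here is bookkeeping: that the class of normal forms \eqref{good-nf}--\eqref{intgnf} coincides with the one for which \cite{Gong-Stolovitch2016}[Theorem 4.13] guarantees convergence, and that the Poincar\'e type condition used here matches theirs.

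When $\KK=\RR$ I would secure real-analyticity by complexification together with the $\rho$-equivariant version of \rl{lem:PD-NF}. Complexifying, the real system corresponds to a holomorphic family commuting with the anti-holomorphic involution $\rho$ given by complex conjugation; the $\rho$-equivariant normalization ensures that the formal, and then after the convergence step the holomorphic, transformation can be chosen to commute with $\rho$, hence to restrict to a genuine real-analytic transformation on the fixed-point set of $\rho$. One must check that the convergence statement of \cite{Gong-Stolovitch2016} is compatible with this symmetry, i.e. that the holomorphic normalizer it produces inherits the equivariance; this is the point that requires the most care and follows from the uniqueness of the normalizer up to the relevant normalizing group.

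The heart of the argument is therefore borrowed: all the genuine analytic difficulty, namely the Poincar\'e-domain estimates forcing the formal series to converge, is contained in \cite{Gong-Stolovitch2016}[Theorem 4.13]. Consequently the main obstacle in the present proof is not an estimate but a matching problem, confirming that our integrable normal form \eqref{good-nf}--\eqref{intgnf} lies within the scope of that theorem and that the real structure is preserved; once this is settled the convergence of the normalization is immediate.
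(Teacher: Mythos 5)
Your proposal is correct and is essentially the paper's own argument: the paper derives this theorem immediately by combining the formal normal form of Theorem \ref{thm:FormalNormalForm} with \cite{Gong-Stolovitch2016}[Theorem 4.13], which upgrades a formal conjugacy to the normal form \eqref{good-nf}--\eqref{intgnf} to a holomorphic one under the Poincar\'e type hypothesis. The additional care you devote to the real case and $\rho$-equivariance goes beyond what the paper does here (it treats the real situation separately in its final section), but does not change the substance of the proof.
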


We remark that for integrable systems of type $(1,n-1)$, any diffeomorphism satisfying the assumption that at least one eigenvalue does not lie on the unit circle in \cite{Zhang2013} is projectively hyperbolic and of the Poincar\'e type~:
\begin{proposition}
  Let $\Phi$ be an integrable diffeomorphism on $\KK^n$ in the Poincar\'e-Dulac normal form formally. Suppose its linear part is diagonal written as $\Phi^{(1)}(x)=(\mu_1x_1,\ldots,\mu_nx_n)$ and at least one of its eigenvalues does not lie on the unit circle, then $\Phi$ is of the Poincar\'e type.
\end{proposition}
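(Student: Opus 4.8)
The plan is to specialise the Poincaré-type condition to $p=1$ and to extract the needed geometry of exponents from integrability via Lemma~\ref{lem:VectOmega}. Throughout I write $\mu^{s}:=\mu_{1}^{s_{1}}\cdots\mu_{n}^{s_{n}}$ and $|s|:=s_{1}+\cdots+s_{n}$ for $s\in\NN^{n}$, and I set $v:=(\ln|\mu_{1}|,\ldots,\ln|\mu_{n}|)$. Since some eigenvalue is off the unit circle, $v\neq0$ and $\Phi$ is projectively hyperbolic, so $W:=\{u\in\RR^{n}:\langle u,v\rangle=0\}$ has dimension $n-1$. The conceptually decisive observation is that the modulus requirement is essentially free: if $\mu^{s'}=\mu^{s}$ then $s-s'$ lies in the relation lattice $L:=\{\ell\in\ZZ^{n}:\mu^{\ell}=1\}$, and since $|\mu^{\ell}|=1$ on $L$ we have $L\subseteq W$, whence $\langle s',v\rangle=\langle s,v\rangle$ and $|\mu^{s'}|=|\mu^{s}|=e^{\langle s,v\rangle}$. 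Thus $\max(|\mu^{s'}|,|\mu^{s'}|^{-1})>c^{-1}d^{|s'|}$ reads $e^{|\langle s,v\rangle|}>c^{-1}d^{|s'|}$, and the whole task reduces to producing, for each admissible $s$, a substitute $s'$ in the same $\mu$-fibre, below $s$, whose degree is controlled linearly by $|\langle s,v\rangle|$.

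Next I would record the structural input. By the projectively hyperbolic case of Lemma~\ref{lem:VectOmega}, $\dim\mathit{Vect}_{\KK}\Omega=q=n-1$, so $\Omega$ spans $W$; hence $W$ is rational, and $L$ is a sublattice of $W\cap\ZZ^{n}$ of full rank $n-1$, so of finite index $N$. From this I claim $\mathrm{cone}(\Omega)=W\cap\RR^{n}_{\geq0}$: the inclusion $\subseteq$ is clear, while for $\supseteq$ one uses that $W\cap\RR^{n}_{\geq0}$ is a rational polyhedral cone generated (Gordan) by $W\cap\NN^{n}$, and any $w\in W\cap\NN^{n}$ satisfies $Nw\in L\cap\NN^{n}=\Omega$, so $w\in\mathrm{cone}(\Omega)$.

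The core step, and the main obstacle, is the degree estimate. Given $s$, reduce it by repeatedly subtracting nonzero $\omega\in\Omega$ with $\omega\leq s$; this preserves $\mu^{s}$ and terminates at an \emph{$\Omega$-reduced} $s'\leq s$, meaning $s'\not\geq\omega$ for every nonzero $\omega\in\Omega$. I claim there are constants $A,B$ with $|t|\leq A|\langle t,v\rangle|+B$ for every $\Omega$-reduced $t$. Suppose not; then there are reduced $t^{(k)}$ with $|t^{(k)}|\to\infty$ and $|\langle t^{(k)},v\rangle|/|t^{(k)}|\to0$. Normalising $u^{(k)}:=t^{(k)}/|t^{(k)}|$ in the compact simplex and passing to a limit $u_{*}$ gives $u_{*}\in(W\cap\RR^{n}_{\geq0})\setminus\{0\}=\mathrm{cone}(\Omega)\setminus\{0\}$; expressing $u_{*}$ as a nonnegative combination of elements of $\Omega$ yields a nonzero $\omega\in\Omega$ with $\mathrm{supp}(\omega)\subseteq\mathrm{supp}(u_{*})$, and since $t^{(k)}_{i}\to\infty$ on $\mathrm{supp}(u_{*})$ we get $t^{(k)}\geq\omega$ for large $k$, contradicting reducedness. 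This is precisely where integrability is genuinely used, through the cone identity of the previous paragraph.

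Finally I would assemble the pieces. For $s\notin\mathcal{R}_{m}$ with $\mu^{s}\neq\mu_{m}$, the reduced $s'$ satisfies $\mu^{s'}=\mu^{s}\neq\mu_{m}$ and $s'-s\in-\NN^{n}$, so the equality, non-resonance and comparability requirements hold; the only remaining indices, those with $\mu^{s}=\mu_{m}$ and $|s|\leq1$, correspond to the diagonal linear part and carry no nonlinear term, so they do not enter the estimate. With $|s'|\leq A|\langle s,v\rangle|+B$, the growth requirement $e^{|\langle s,v\rangle|}>c^{-1}d^{|s'|}$ follows from $(e/d^{A})^{|\langle s,v\rangle|}>c^{-1}d^{B}$; choosing $d\in(1,e^{1/A})$ and $c>d^{B}$ gives $c^{-1}d^{B}<1\leq(e/d^{A})^{|\langle s,v\rangle|}$, so the inequality holds strictly for every admissible $s$. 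Hence $\Phi$ is of the Poincaré type.
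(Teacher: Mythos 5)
Your proof is correct, but it follows a genuinely different route from the paper's. The paper exploits the rank-$(n-1)$ integer matrix of exponents of the monomial first integrals to derive an explicit arithmetic structure for the spectrum: $(\ln|\mu_1|,\ldots,\ln|\mu_n|)$ is proportional to an integer vector $(k_1,\ldots,k_n)$, and a parallel analysis of the arguments gives $\mu_i=d^{k_i}e^{2\pi\sqrt{-1}\,\theta_i}$ with $\theta_i\in\QQ$ and $|d|>1$. From this it manufactures concrete relations $\mu_i^{\alpha_i}=1$ (unit-modulus eigenvalues) and $\mu_i^{\beta_i}\mu_j^{\beta_j}=1$ (contracting/expanding pairs), uses them as reduction moves so that either the contracting-or-neutral block or the expanding-or-neutral block of $s'$ stays bounded by a fixed $M$, and then bounds $\left|\mu_1^{s'_1}\cdots\mu_n^{s'_n}\right|$ below by $\mathrm{const}\cdot d^{s'_1+\cdots+s'_n}$ directly. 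You instead note that the modulus is constant on each fibre $\{s':\mu^{s'}=\mu^{s}\}$, equal to $e^{\langle s,v\rangle}$, which converts the whole problem into a purely combinatorial degree bound for $\Omega$-reduced exponents; you obtain that bound from the cone identity $\mathrm{cone}(\Omega)=W\cap\RR^{n}_{\geq0}$ (Gordan's lemma plus the finite index of the relation lattice, with Lemma \ref{lem:VectOmega} supplying $\dim\mathit{Vect}_{\KK}\Omega=n-1$) together with a compactness argument on the simplex. Your route is more conceptual, bypasses the rationality of the arguments $\theta_i$ entirely, and would adapt to more general spanning sets $\Omega$, at the price of non-effective constants $A,B$ (the paper's $c,d$ are explicit); the paper's route yields the structural by-product $\mu_i=d^{k_i}e^{2\pi\sqrt{-1}\,\theta_i}$, which is of independent interest. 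One small merit of your write-up: you explicitly dispose of the indices $s\notin\mathcal{R}_m$ with $\mu^{s}=\mu_m$ and $s_1+\cdots+s_n\leq1$, for which the non-resonance clause of the definition cannot hold literally; the paper passes over this silently, and your reading (these indices carry no nonlinear term) is the intended one.
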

\begin{proof}
  Suppose $x_1^{\ell_{j1}}\cdots x_n^{\ell_{jn}}, j=1,\ldots,n-1$ are $n-1$ independent first integrals of $\Phi^{(1)}$. Then then equation \eqref{eq:33} in this particular case becomes
  \begin{equation}
  \label{eq:40}
  L
    \begin{pmatrix}
        \ln|\mu_1|\\
        \vdots\\
        \ln|\mu_n|
    \end{pmatrix}
    :=
    \begin{pmatrix}
        \ell_{11}&\cdots&\ell_{1\,n}\\
        \vdots&&\vdots\\
        \ell_{n-1\,1}&\cdots&\ell_{n-1\,n}
    \end{pmatrix}
    \begin{pmatrix}
        \ln|\mu_1|\\
        \vdots\\
        \ln|\mu_n|
    \end{pmatrix}
    =0.
  \end{equation}
  As the $n-1$ by $n$ matrix $L$ has rank $n-1$ by independence, the dimension of the space of its solutions is one. Then the hypothesis that there exists at least one of the eigenvalues does not lie on the unit circle implies that $(\ln|\mu_1|,\ldots,\ln|\mu_n|)$ is a nonzero solution of equation \eqref{eq:40}; on the other hand, as $L:=(\ell_{ji})_{(n-1)\times n}$ is an integer matrix, equation \eqref{eq:40} has integer solutions. Thus there exists an integer solution $(k_1,\ldots,k_n)$ and a real number $c>0$ such that $(\ln|\mu_1|,\ldots,\ln|\mu_n|)=c(k_1,\ldots,k_n)$. Then we get $\ln|\mu_i|=ck_i$ and then $|\mu_i|=(e^c)^{k_i}=e^{ck_i}$.

  Now write $\mu_i=e^{ck_i}e^{\sqrt{-1}\Arg \mu_i}$ where $0\leqslant\Arg \mu_i<2\pi$ denotes the principal value of the argument of $\mu_i$, then by  the property $\mu_1^{\ell_{j1}}\cdots\mu_n^{\ell_{jn}}=1$, there exist integers $K_1,\ldots,K_{n-1}$ such that
  \begin{equation}
  \label{eq:41}
   \ell_{j1}\Arg \mu_1+\cdots+\ell_{jn}\Arg \mu_n=2K_j\pi,\quad j=1,\ldots,n-1.
  \end{equation}
  This is a (non-homogeneous if $K_j\neq 0$) linear system and its real solutions form a one dimensional affine space: the difference of any two solutions is a solution of \eqref{eq:40}.  Then, by the same argument as above, we can take a special solution $2\pi(\theta_1,\ldots,\theta_n)$ such that $\theta_i$'s are rational numbers and therefore there exists a real number $c'$ such that $(\Arg \mu_1,\ldots,\Arg \mu_n)=2\pi(\theta_1,\ldots,\theta_n)+c'(k_1,\ldots,k_n)$.
  \[
   \mu_i=e^{ck_i}e^{\sqrt{-1}\,2\pi\theta_i}e^{\sqrt{-1}\,c'k_i}
   =e^{(c+\sqrt{-1}c')k_i}e^{\sqrt{-1}\,2\pi\theta_i}=d^{k_i}e^{\sqrt{-1}\,2\pi\theta_i},
  \]
  in which $d=e^{c+\sqrt{-1}c'}$ with $|d|=e^c>1$.

  For any $\mu_i$ with $|\mu_i|=1$ or equivalently $k_i=0$, $\mu_i=e^{\sqrt{-1}\,2\pi\theta_i}$. Then there exists a natural number $\alpha_i$ such that $\mu_i^{\alpha_i}=1$ since $\theta_i$ is rational and therefore $x_i^{\alpha_i}$ is a first integral of $\Phi^{ss}$.
  For any pair $\mu_i$ and $\mu_j$ with $|\mu_i|<1$ and $|\mu_j|>1$, we have $k_i<0<k_j$ and therefore there exist a pair of natural numbers $\beta_i$ and $\beta_j$ such that $\beta_ik_i+\beta_jk_j=0$ and $\beta_i\theta_i+\beta_j\theta_j\in\ZZ$. Then  $\mu_i^{\beta_i}\mu_j^{\beta_j}=1$ and therefore $x_i^{\beta_i}x_j^{\beta_j}$ is a first integral of $\Phi^{ss}$.

  We now claim that for any $(s_1,\ldots,s_n)\in\NN^n$, there exists $(s'_1,\ldots,s'_n)\in\NN^n$ such that
  \begin{itemize}
    \item $\mu_{1}^{s'_1}\cdots\mu_{n}^{s'_n}=\mu_{1}^{s_1}\cdots\mu_{n}^{s_n}$;
    \item either $\{s'_i: i \mbox{~satisfies~}|\mu_i|\leqslant1\}$ or $\{s'_j: j \mbox{~satisfies~}|\mu_j|\geqslant1\}$ is bounded.
  \end{itemize}
  In fact, let $M$ be a natural number bigger than all possible $\alpha_i,\beta_i,\beta_j$, then for $s_i$ with $i$ satisfies $|\mu_i|=1$, set $s'_i$ to be the remainder of $s_i$ divided by $\alpha_i$. In the same spirit, for $s_i>M$ and $s_j>M$ with $i\in I:=\{i:|\mu_i|<1\}$ and $j\in J:=\{j:|\mu_j|>1\}$, we take the maximal integer $m$ such that $(s_i,s_j)-m(\beta_j,\beta_j)$ is nonnegative and take this vector to replace $(s_i,s_j)$, then the new $s_i$ and $s_j$ satisfy $s_i<\beta_i<M$ or $s_j<\beta_j<M$; continue the operation for the other $(s_i,s_j)$'s with $i\in I, j\in J$ and $s_i>M, s_j>M$ and obviously the operation will stop in finite steps; set $s'_i$ with $i\in I\cup J$ to be the final $s_i$ after reductions. Then $(s'_1,\ldots,s'_n)$ satisfies the second request. It satisfies the first request since each operation holds the property.

  Now assume $s'_i<M$ for all $i\in\{i:|\mu_i|\leqslant1\}$. Remember $|\mu_j|\geqslant d>1$ for $j\in J$, we have
  \[
  \begin{aligned}
   \left|\mu_1^{s'_1}\cdots\mu_n^{s'_n}\right|
   &=\prod_{i\in\{i:|\mu_i|\leqslant1\}}|\mu_i|^{s'_i}\prod_{j\in J}|\mu_j|^{s'_j}\\
   &\geqslant\prod_{i\in\{i:|\mu_i|\leqslant1\}}|\mu_i|^{s'_i}d^{\sum_{j\in J}s'_j}\\
   &=\prod_{i\in\{i:|\mu_i|\leqslant1\}}\left({\frac{1}{d}|\mu_i|}\right)^{s'_i}d^{s'_1+\cdots+s'_n}
   \geqslant\prod_{i\in\{i:|\mu_i|\leqslant1\}}\left({\frac{1}{d}|\mu_i|}\right)^{M}d^{s'_1+\cdots+s'_n}.
  \end{aligned}
  \]
  Hence $\Phi$ is of the Poincar\'e type. One can get the same conclusion by a similar estimate on $\left|\mu_1^{s'_1}\cdots\mu_n^{s'_n}\right|^{-1}$ if $s'_j<M$ for all $j\in\{j:|\mu_j|\geqslant1\}$.
\end{proof}

With the help of a lemma (Lemma 2.5 in \cite{Zhang2013}) which claim that the linear part of the integrable diffeomorphism on $(\CC^n,0)$ of type $(1,n-1)$ is diagonalizable, it follows that
\begin{corollary}\cite{Zhang2013}
  An analytic integrable diffeomorphism of type $(1,n-1)$ on $(\CC^n,0)$ such that at least one of its eigenvalues does not lie on the unit circle is analytically conjugate to the normal form \ref{good-nf} together with \ref{intgnf} as in theorem \ref{thm:FormalNormalForm}.
\end{corollary}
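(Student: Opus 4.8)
The plan is to recognize this corollary as the specialization of the analytic theorem above to the case $p=1$, $q=n-1$, so that the hypotheses needed there are supplied automatically by the structure of a type $(1,n-1)$ system. First I would invoke Lemma 2.5 of \cite{Zhang2013} to diagonalize the linear part; after a linear change of coordinates one may assume $\Phi^{(1)}(x)=(\mu_1x_1,\ldots,\mu_nx_n)$, which gives the semi-simplicity half of the non-degeneracy condition. The other half, namely the existence of $n-1$ functionally independent homogeneous first integrals of $\Phi^{(1)}$, is automatic for analytic systems by Ziglin's lemma \cite{Ziglin1982}, as remarked after the definition of non-degeneracy. Thus $\Phi$ together with its $n-1$ first integrals is a non-degenerate analytic integrable system of type $(1,n-1)$.

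Next I would observe that the hypothesis ``at least one eigenvalue does not lie on the unit circle'' is exactly projective hyperbolicity in the case $p=1$: for a single diffeomorphism, projective hyperbolicity requires only that the single real vector $(\ln|\mu_1|,\ldots,\ln|\mu_n|)$ be nonzero, i.e. that some $|\mu_i|\neq 1$, as already noted in the text following the definition. Hence the condition of \rt{thm:FormalNormalForm} holds, and that theorem furnishes a formal conjugacy to the normal form \re{good-nf} satisfying \re{intgnf}.

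To pass from the formal to the analytic conclusion I would verify the Poincar\'e-type hypothesis required by the analytic theorem, which is precisely the content of the Proposition proved just above: a diagonal linear part belonging to a type $(1,n-1)$ integrable system with some eigenvalue off the unit circle is of Poincar\'e type. Since being of Poincar\'e type depends only on the eigenvalues $(\mu_1,\ldots,\mu_n)$ through the resonance sets $\mathcal R_m$, and these are conjugation invariants, one may if necessary first bring $\Phi$ to Poincar\'e-Dulac normal form by \rl{lem:PD-NF} before applying the Proposition, at no cost. With non-degeneracy, the condition of \rt{thm:FormalNormalForm}, and the Poincar\'e-type property all established, the analytic theorem applies directly and yields the analytic conjugacy to \re{good-nf}, \re{intgnf}.

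The main point is that no new hard analysis is needed: the corollary is an assembly of the results already in place, and the only genuine care required is in confirming that the two properties that are in general additional hypotheses---semi-simplicity together with non-degeneracy, and projective hyperbolicity---collapse in the type $(1,n-1)$ setting to Zhang's diagonalizability lemma plus Ziglin's lemma, and to the lone condition that some eigenvalue avoid the unit circle. The closest thing to an obstacle is the bookkeeping check that the Proposition's assumption of formal Poincar\'e-Dulac normal form is harmless here, which is settled by the conjugation-invariance of the Poincar\'e-type condition.
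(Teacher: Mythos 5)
Your proposal is correct and follows essentially the same route as the paper, which derives the corollary by combining Zhang's Lemma 2.5 (diagonalizability of the linear part), the observation that an eigenvalue off the unit circle gives projective hyperbolicity when $p=1$, the preceding Proposition establishing the Poincar\'e-type condition, and the analytic normalization theorem. The paper states this in a single sentence; your write-up merely makes the same assembly explicit.
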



\subsection*{\textit{Smooth case}}
In the smooth category, we only consider the weakly hyperbolic systems, which were firstly introduced and studied by M. Chaperon.

\begin{definition}([section1.2] in \cite{Chaperon2013})
\label{def:44}
  Let $\Phi_1,\ldots,\Phi_p$ be $p$ commuting diffeomorphisms on $(\KK^n,0)$. Suppose the eigenvalues of the semi-simple part $\Phi_i^{ss}$ of the linear part of $\Phi_i$ are $\mu_{i1},\ldots,\mu_{in}$.  For any $k\in\{1,\ldots,n\}$, we can get a linear form $c_k$ in $(\RR^p)^*$ defined by mapping $(t_1,\ldots,t_p)\in\RR^p$ to $\sum_{i=1}^p \ln|\mu_{ik}| t_i$.
  The $\ZZ^p$-action generated by the diffeomorphisms is called
\begin{itemize}
  \item\textbf{hyperbolic} if any $p$ linear forms in $\{c_1,\ldots,c_n\}$ are linearly independent in $(\RR^p)^*$;
  \item\textbf{weakly hyperbolic} if  the convex hull of any $p$ linear forms in $\{c_1,\ldots,c_n\}$ does not contain the origin of $(\RR^p)^*$.
\end{itemize}
Obviously, hyperbolicity implies weak hyperbolicity.
\end{definition}

We remark if $\KK=\CC$ and the diffeomorphisms are viewed as real diffeomorphisms from $(\RR^2)^n$ to itself, then the eigenvalues of $\Phi_i^{ss}$ are $\mu_{i1},\bar\mu_{i1},\ldots,\mu_{in},\bar\mu_{in}$. Then we can get $2n$ linear forms $c_k, k=1,2,\ldots,2n$ with $c_{2k}=c_{2k-1}, k=1,\ldots,n,$ and therefore the property that the convex hull of any $p$ linear forms in $\{c_1,\ldots,c_{2n}\}$ does not contain the origin of $(\RR^p)^*$ coincides with the previous one.

\begin{theorem}
 Let $(\Phi_1=\Phi,\Phi_2,\ldots,\Phi_p,F_1,\ldots,F_q)$ be a non-degenerate smooth integrable system of type $(p,q)$ on $\KK^n$ around $0$ satisfying the condition of theorem \ref{thm:FormalNormalForm}. If the system is  weakly hyperbolic, then the diffeomorphisms are smoothly conjugate to a smooth normal form of the form \ref{good-nf} together with \ref{intgnf} as in theorem \ref{thm:FormalNormalForm}.
\end{theorem}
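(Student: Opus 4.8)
The plan is to deduce the smooth statement from the formal normal form of Theorem \ref{thm:FormalNormalForm} together with a smooth rigidity (``Chen--Sternberg for actions'') result for weakly hyperbolic $\ZZ^p$-actions, in exactly the way the analytic case was deduced from the formal theorem plus the Gong--Stolovitch convergence theorem. First I would apply Theorem \ref{thm:FormalNormalForm} to obtain a formal transformation $\hat\Psi$, tangent to the identity, conjugating each $\Phi_i$ to its formal normal form $\hat\Phi_i$ of the shape \eqref{good-nf} satisfying \eqref{intgnf}. By Borel's theorem there is a genuine smooth diffeomorphism $\Psi$ whose infinite jet at $0$ equals $\hat\Psi$; setting $\tilde\Phi_i:=\Psi^{-1}\circ\Phi_i\circ\Psi$ yields a smooth family of commuting diffeomorphisms whose infinite jet at $0$ coincides with the formal normal form. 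Thus the problem is reduced to removing a \emph{flat} discrepancy, i.e.\ conjugating a smooth weakly hyperbolic action to a smooth model with the same $\infty$-jet.

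Next I would construct the smooth target $N_i$ lying \emph{exactly} in the normal form \eqref{good-nf}--\eqref{intgnf}. Each formal coefficient $\hat\varphi_{ik}$ is, by Lemma \ref{Lem:NFofFI}, a formal series in the monomial first integrals $x^\gamma$ with $\gamma\in\Omega$, hence a formal series in a fixed basis $u_1,\dots,u_q$ of $\mathit{Vect}_\KK\Omega$; Borel's theorem applied in these $q$ reduced variables produces smooth functions $\tilde\varphi_{ik}$, with the prescribed $\infty$-jet, that are genuine smooth common first integrals of the $\Phi_i^{ss}$. I would choose these realizations so that the $q$ independent relations $\prod_{k=1}^n(1+\tilde\varphi_{ik})^{\gamma_k}=1$ indexed by a basis of $\Omega$ hold exactly (the $n-q=p$ extra degrees of freedom among the $\tilde\varphi_{ik}$ make this possible while keeping the assigned $\infty$-jets), so that \eqref{intgnf} holds for all of $\Omega$ and, as in the formal computation, the resulting $N_i=(\mu_{i1}x_1(1+\tilde\varphi_{i1}),\dots,\mu_{in}x_n(1+\tilde\varphi_{in}))$ commute exactly. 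By construction $\tilde\Phi_i$ and $N_i$ are two smooth commuting families with identical infinite jet at $0$ and identical (diagonalizable) linear parts, so in particular they generate $\ZZ^p$-actions with the same linear forms $c_k$ of Definition \ref{def:44}; hence $N_i$ is weakly hyperbolic precisely because $\Phi_i$ is.

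Finally I would invoke Chaperon's smooth conjugacy theorem for weakly hyperbolic $\ZZ^p$-actions \cite{Chaperon2013}: two weakly hyperbolic smooth commuting families sharing a common fixed point at which their infinite jets agree are conjugate by a smooth diffeomorphism $\Theta$ tangent to the identity to infinite order. Applying it to $\tilde\Phi_i$ and $N_i$ and composing, $\Psi\circ\Theta$ is a smooth transformation, tangent to the identity (its $\infty$-jet is again $\hat\Psi$), conjugating each $\Phi_i$ to $N_i$, which is the asserted smooth normal form. I expect the genuine difficulty to be concentrated in this last step: the formal and Borel steps are essentially bookkeeping, whereas killing the flat remainder is exactly where weak hyperbolicity is indispensable. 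The subtlety is that weak hyperbolicity, unlike true hyperbolicity, permits neutral directions, so the flat conjugacy cannot be obtained from a single contraction estimate; one must use Chaperon's stable/unstable filtration together with a path (homotopy) argument along the neutral directions, and the verification that the weak-hyperbolicity and commutation hypotheses are met by the pair $(\tilde\Phi_i,N_i)$ is the crux that licenses the import of his theorem.
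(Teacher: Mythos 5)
Your proposal is correct and follows essentially the same route as the paper: formal normal form via Theorem \ref{thm:FormalNormalForm}, Borel realization of the $\hat\varphi_{ik}$ as smooth functions of the monomial first integrals, a flat correction forcing the relations $\prod_{k=1}^n(1+\varphi_{ik})^{\gamma_k}=1$ to hold exactly (the paper does this by fixing $\varphi_{ik}=\tilde\varphi_{ik}$ for $k>q$ and solving a $q\times q$ linear system in the logarithms), and finally Chaperon's rigidity theorem for weakly hyperbolic actions to upgrade the formal conjugacy to a smooth one.
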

\begin{proof}
   The idea of the proof is to construct another smooth integrable system which is formally conjugate to the original system and then we can apply Chaperon's theorem \cite{Chaperon2013}, which asserts that two weakly hyperbolic smooth $\ZZ^k\times\RR^m$-action germs are smoothly conjugate if and only if they are formally conjugate. 

   By theorem \ref{thm:FormalNormalForm}, the system is formally conjugate to
   \[
    \hat\Phi_i=(\mu_{i1}x_1(1+\hat\varphi_{i1}),\ldots,\mu_{in}x_n(1+\hat\varphi_{in})),\,i=1,\ldots,p,
  \]
  where $\hat\varphi_{ik}$'s are formal series of finitely many  generators, say $G_1,G_2,\ldots,G_r$, which are monomial first integrals of $\Phi_i^{ss}$'s. Moreover, these formal series satisfy the first integral relations $\prod_{k=1}^n (1+\hat\varphi_{ik})^{\gamma_k}=1$ in the formal sense for all $(\gamma_1,\ldots,\gamma_n)$ in the set $\Omega$ of common solutions of resonance equations \eqref{eq:22}.

  By the Borel's theorem, there exist smooth functions $\tilde\varphi_{ik}$'s which are indeed smooth functions of $G_1,G_2,\ldots,G_r$ whose formal Taylor power series expansion at the origin are just $\hat\varphi_{ik}$'s respectively. 
  Define
  \[
    \tilde\Phi_i=(\mu_{i1}x_1(1+\tilde\varphi_{i1}),\ldots,\mu_{in}x_n(1+\tilde\varphi_{in})),\,i=1,\ldots,p,
  \]
  A priori, this new family of smooth dffeomorphisms do not commute any longer. In order to retrieve the commutativity property, it is sufficient to replace functions $\tilde\varphi_{ik}$'s by smooth functions $\varphi_{ik}$'s such that $\varphi_{ik}$'s satisfy  $\prod_{k=1}^n (1+\varphi_{ik})^{\gamma_k}=1$ for $(\gamma_1,\ldots,\gamma_n)\in\Omega$. This replacement can by realized by only adjusting the flat parts of $\tilde\varphi_{ik}$'s as follows.

  Take $q=n-p$ $\QQ$-linearly independent elements in $\Omega$, denoted by $\omega_j:=(\omega_{j1},\ldots,\omega_{jn})$, $j=1,2,\ldots,q$. Fix $i$ and assume $\prod_{k=1}^n (1+\tilde\varphi_{ik})^{\omega_{jk}}=1+flat_{ij}$ for $j=1,\ldots,q$ in which $flat_{ij}$'s are flat functions i.e., their infinite jets at $0$ are zero. Take logarithm of the equations, we get
  \begin{equation}
  \label{eq:42}
    \omega_{j1}\ln(1+\tilde\varphi_{i1})+\cdots+\omega_{jn}\ln(1+\tilde\varphi_{in})=\ln(1+flat_{ij}),\quad j=1,2,\ldots,q.
  \end{equation}
  Remember the functions $\varphi_{ik}$ we are searching for satisfy
  \begin{equation}
  \label{eq:43}
    \omega_{j1}\ln(1+\varphi_{i1})+\cdots+\omega_{jn}\ln(1+\varphi_{in})=0,\quad j=1,2,\ldots,q.
  \end{equation}
  Assume without loss of generality the first $q$ columns of the matrix $(\omega_{jk})$ are independent and let $\varphi_{ik}=\tilde\varphi_{ik}$ for $k=q+1,\ldots,n$. For $k=1,\ldots,q$, let $\varphi_{ik}$ be the unique solution the linear equations obtained by \eqref{eq:42} minus \eqref{eq:43}
  \[
    \sum_{m=1}^q \omega_{jm}\left(\ln(1+\tilde\varphi_{im})-\ln(1+\varphi_{im})\right)=\ln(1+flat_{ij}),\quad j=1,2,\ldots,q.
  \]
  We get immediately that $\prod_{k=1}^n (1+\varphi_{ik})^{\omega_{jk}}=1$ for $j=1,\ldots,q$, and it is also easy to verify $\varphi_{i\ell}-\tilde\varphi_{i\ell}$ is flat since $\ln{\dfrac{1+\tilde\varphi_{i\ell}}{1+\varphi_{i\ell}}}$ is flat.
  By Lemma \ref{lem:VectOmega}, any element $(\gamma_1,\ldots,\gamma_n)$ in $\Omega$ is a $\QQ$-linear combination of $\omega_1,\ldots,\omega_q$. Hence, we get $\prod_{k=1}^n (1+\varphi_{ik})^{\gamma_k}=1$  for all $(\gamma_1,\ldots,\gamma_n)$ in $\Omega$.

  Let us define the family of diffeomorphisms
  \[
    \Psi_i(x_1,\ldots,x_n):=(\mu_{i1}x_1(1+\varphi_{i1}),\ldots,\mu_{in}x_n(1+\varphi_{in})),\quad i=1,\ldots,p.
  \]
   Due to the property $\prod_{k=1}^n (1+\varphi_{ik})^{\omega_{jk}}=1$, it is commutative. As the infinite jets at $0$ of $\tilde\varphi_{ik}$ and $\varphi_{ik}$ are the same, the original family of diffeomorphisms $\Phi_1,\ldots,\Phi_p$ is still formally conjugate to the family of $\Psi_1,\ldots,\Psi_p$, and it follows by Chaperon's theorem that they are smoothly conjugate.
\end{proof}

Observe that hyperbolic systems are projectively hyperbolic and weakly hyperbolic, it follows naturally that
\begin{corollary}
  Let $(\Phi_1=\Phi,\Phi_2,\ldots,\Phi_p,F_1,\ldots,F_q)$ be a non-degenerate smooth integrable system of type $(p,q)$ on $\KK^n$ around $0$. If the system is hyperbolic, then the diffeomorphisms are smoothly conjugate to a smooth normal form of the form \eqref{good-nf} together with \eqref{intgnf} as in theorem \ref{thm:FormalNormalForm}.
\end{corollary}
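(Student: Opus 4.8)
The plan is to reduce this corollary to the smooth theorem established just above by verifying that hyperbolicity implies both of that theorem's hypotheses, namely (i) the condition of Theorem \ref{thm:FormalNormalForm} and (ii) weak hyperbolicity. Both are purely linear-algebraic statements about the $p\times n$ matrix $L=(\ln|\mu_{ik}|)$ of log-moduli of the eigenvalues, so no new analysis is required and the only task is to unwind the definitions.

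First I would show that a hyperbolic family is projectively hyperbolic. By definition, hyperbolicity asserts that every choice of $p$ among the $n$ columns $c_k=(\ln|\mu_{1k}|,\ldots,\ln|\mu_{pk}|)$ of $L$ yields $p$ linearly independent vectors of $(\RR^p)^*$; in particular at least one such $p\times p$ minor of $L$ is nonzero, so $L$ has rank $p$. But $L$ having rank $p$ is exactly the statement that its $p$ rows $(\ln|\mu_{i1}|,\ldots,\ln|\mu_{in}|)$, $i=1,\ldots,p$, are $\RR$-linearly independent, which is the definition of projective hyperbolicity. Hence the projective-hyperbolicity branch of the Theorem \ref{thm:FormalNormalForm} hypothesis holds, and the formal normal form \eqref{good-nf} together with \eqref{intgnf} is available.

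Next I would check that hyperbolicity implies weak hyperbolicity, as already noted in Definition \ref{def:44}. Concretely, fix any $p$ of the linear forms, say $c_{k_1},\ldots,c_{k_p}$; by hyperbolicity they are linearly independent in $(\RR^p)^*$, so the only relation $\sum_j t_j c_{k_j}=0$ is the trivial one with all $t_j=0$. A convex combination, however, requires $t_j\geqslant 0$ with $\sum_j t_j=1$, which forbids all the $t_j$ from vanishing simultaneously; therefore $0$ cannot be written as a convex combination of $c_{k_1},\ldots,c_{k_p}$, i.e. the origin lies outside their convex hull. Since the $k_j$ were arbitrary, this is precisely weak hyperbolicity.

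With both hypotheses verified, the corollary follows immediately from the preceding smooth theorem applied to the given non-degenerate smooth integrable system: it is smoothly conjugate to the normal form \eqref{good-nf} together with \eqref{intgnf}. I do not expect any genuine obstacle here, since the content of the corollary is entirely subsumed by the smooth theorem; the only work is the elementary verification that the stronger hyperbolicity assumption implies the two weaker hypotheses of that theorem.
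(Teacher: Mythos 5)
Your proposal is correct and follows exactly the paper's route: the paper derives this corollary from the single observation that hyperbolicity implies both projective hyperbolicity (giving the hypothesis of Theorem \ref{thm:FormalNormalForm}) and weak hyperbolicity (giving the hypothesis of the preceding smooth theorem). Your rank argument for the first implication and convex-combination argument for the second simply spell out the details the paper leaves implicit.
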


\section{Real case}
In this section, we consider families of real commuting diffeomorphisms $\Phi_1,\ldots,\Phi_p$ on $(\RR^n,0)$. The coefficients of the Taylor expansion of $\Phi_i(x)$'s at the origin are real numbers. If the linear parts $\Phi^{(1)}_i$ is diagonalizable over $\RR$, then all preceding results hold true with the same proof. 

Here we are concerned with cases where  $\Phi^{(1)}_i=A_i$ are not diagonalizable over $\mathbb{R}$ but merely $\mathbb{C}$.   By the commutativity, one can decompose $\mathbb{R}^n= \oplus_{j=1}^l V_j\oplus \mathbb{R}^{n-2l}$
where each $V_j$ is a real plane left invariant by all $A_i$'s and such that  at least one of the $A_i|_{V_j}$'s is diagonalizable over $\mathbb{C}$ but not over $\mathbb{R}$. 
Under a  basis of vectors from eigenspaces, each $A_i$  becomes a  block diagonal matrix consisting of $l$ two by two blocks and $n-2l$ real numbers. Suppose the eigenvalues of $A_i|_{V_j}$ are $\mu_{ij}=u_{ij}+\sqrt{-1}v_{ij},\bar\mu_{ij}=u_{ij}-\sqrt{-1}v_{ij}$, then the $j$-th block of $A_i$ can be of the form $\begin{pmatrix}u_{ij}&-v_{ij}\\v_{ij}&u_{ij}\end{pmatrix}$ if the basis is well chosen.

Denote by $\mathcal{E}_j:=V_j\oplus\sqrt{-1}V_j$ the complexification of $V_j$, it is natural to get a canonical linear map  $A_i|_{\mathcal{E}_j}$. The complex vector $e=(\frac{1}{2},-\frac{1}{2}\sqrt{-1})$ in ${\mathcal{E}_j}$ is a common eigenvector belonging to $\mu_{ij}$ of $A_i|_{\mathcal{E}_j}$, i.e., $A_i|_{\mathcal{E}_j}e=\mu_{ij}e$ for $i=1,\ldots,p$.  Then $\bar e=(\frac{1}{2},\frac{1}{2}\sqrt{-1})$ is a common eigenvector of $\bar \mu_{ij}$ and $\mathcal{E}_j$ is isomorphic to the $\mathbb{C}$-vector space generated by $e, \bar e$.  Define 
$D_{ij}:= \begin{pmatrix}\mu_{ij}& 0\\ 0&\bar \mu_{ij}\end{pmatrix}$ and 
$P_j:=\begin{pmatrix}\frac{1}{2}&\frac{1}{2}\\ -\frac{1}{2}\sqrt{\scriptstyle{-}1}&\frac{1}{2}\sqrt{\scriptstyle{-}1}\end{pmatrix}$. 
Then for $i=1,\ldots,p$ and $j=1,\ldots,l$, we have
\[
A_{i}|_{\mathcal{E}_j} \, P_j = P_j D_{ij}.
\]
Let $P$ be the linear transformation on $\CC^n$ given by the block diagonal matrix consisting of $l$ copies of  $\begin{pmatrix}\frac{1}{2}&\frac{1}{2}\\ -\frac{1}{2}\sqrt{\scriptstyle{-}1}&\frac{1}{2}\sqrt{\scriptstyle{-}1}\end{pmatrix}$ and Identity of size $n-2l$ and $D_i$ be the linear transformation on $\CC^n$ given by the block diagonal matrix consisting of blocks $D_{i1},\ldots,D_{il}$ and Identity of size $n-2l$. Define $\rho$ to be the following involution $$
\rho(z_1,z_2,\ldots,z_{2l-1},z_{2l},z_{2l+1},\ldots,z_n):= (\bar z_2,\bar z_1,\ldots,\bar z_{2l},\bar z_{2l-1},\bar z_{2l+1},\ldots,\bar z_n),
$$ and denote by $c$  the complex conjugate $c(z_1,\ldots,z_n):=(\bar z_1,\ldots,\bar z_n)$. We easily have
$D_i\circ\rho=\rho\circ D_i \mbox{~for~} i=1,\ldots,p $ and
\begin{equation}
\label{eq:51}
P \rho=c P.
\end{equation}



Now let us consider the family $\{\tilde \Phi_i(z):=P^{-1}\Phi_i(P(z))\}_i$ of transformations of $\mathbb{C}^n$. Obviously,  $\tilde \Phi_i(z)$'s commute pairwise. If the family of $\Phi_i(z)$'s is non-degenerate, weakly non-resonant, (projectively, weakly) hyperbolic, then the family of $\tilde\Phi_i(z)$'s keeps these properties defined according to the eigenvalues which are the same of  $\tilde \Phi_i$'s and  $\Phi_i$'s. 

If $\Phi_i$'s have $q=n-p$   first integrals $F_1,\ldots,F_q$ functionally independent almost everywhere, then $\tilde F_j(z):=F_{j}(Pz)$'s are first integrals of the $\tilde \Phi_i$'s since 
$$
\tilde F_{j}(\tilde \Phi_i(z))= F_{j}(PP^{-1}\Phi_i(Pz))=F_{j}(Pz)=\tilde F_{j}(z).
$$
We also have $\tilde F_1,\ldots,\tilde F_q$ are functionally independent almost everywhere since $P$ is invertible. 

Hence, we get an integrable system $(\tilde \Phi_1,\ldots,\tilde \Phi_p,\tilde F_1,\ldots,\tilde F_q)$ on $\CC^n$ of type $(p,q)$.

Notice the coefficients of the Taylor series at the origin of $\Phi_i$'s are real, we have $c\circ\Phi_i\circ c=\Phi_i$ formally. With the help of the equations \eqref{eq:51} and its equivalent equation $P^{-1}c=\rho^{-1}P^{-1}=\rho P^{-1}$, we have formally
\[
\begin{aligned}
\tilde{\Phi}_i\circ\rho= P^{-1}\Phi_i(P\rho)
&= P^{-1}\circ c\circ c\circ\Phi_i(cP)\\
&= P^{-1}\circ (c\circ\Phi_i)\circ P
= \rho\circ P^{-1}\circ \Phi_i\circ P
= \rho\circ \tilde{\Phi}_i.
\end{aligned}
\]

This is the formal $\rho$-equivariant normal form theory (see lemma \ref{lem:PD-NF}): there exists a formal transformation $\Psi(z)$, tangent to identity at the origin, such that 
\begin{enumerate}
	\item $\Psi\circ \rho= \rho\circ \Psi$
	\item $\hat \Phi_i:=\Psi^{-1}\circ \tilde \Phi_i\circ \Psi$ is in the Poincar\'e-Dulac normal form, i.e., 
	$\hat \Phi_i\circ D_j=D_j\hat \Phi_i$.
\end{enumerate}

Now the proof of theorem \ref{thm:FormalNormalForm} works and we get the complexified integrable diffeomorphisms $\tilde \Phi_i$'s deduced from a real integrable system $(\Phi_1,\ldots, \Phi_p,F_1,\ldots,F_q)$ are formally conjugated by $\Psi$ to $\hat \Phi_i$'s  which are of the form \eqref{good-nf} together with \eqref{intgnf} as in theorem \ref{thm:FormalNormalForm} if the family is either projectively hyperbolic or infinitesimally integrable  with  a  weakly  non-resonant  family  of  generators.

\begin{lemma}
 The formal transformation $P\Psi P^{-1}$ is real in the sense that its coefficients are all real.
\end{lemma}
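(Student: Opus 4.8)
The plan is to reduce the claim to a single intertwining computation, using the standard fact that a formal self-map $G$ of $\CC^n$ has real Taylor coefficients precisely when it commutes with coordinatewise complex conjugation $c$, i.e.\ $c\circ G\circ c=G$. Accordingly I would set $G:=P\Psi P^{-1}$ and aim to verify $c\circ G\circ c=G$ directly, exploiting only the $\rho$-equivariance of $\Psi$ together with the relation between $P$, $\rho$ and $c$ recorded in \eqref{eq:51}.

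First I would extract the two forms of \eqref{eq:51} that are needed. From $P\rho=cP$ one gets immediately $c\circ P=P\circ\rho$, and, using that $\rho$ is an involution (so $\rho^{-1}=\rho$), one also gets $P^{-1}\circ c=\rho\circ P^{-1}$, exactly as noted just before \eqref{eq:51}. The point is that these two identities let me push a $c$ past $P$ (resp.\ $P^{-1}$) at the cost of producing one copy of $\rho$ on the appropriate side.

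The main step is then the computation
\[
c\circ(P\Psi P^{-1})\circ c=(c\circ P)\circ\Psi\circ(P^{-1}\circ c)=P\circ(\rho\circ\Psi\circ\rho)\circ P^{-1}.
\]
Here I would invoke the $\rho$-equivariance $\Psi\circ\rho=\rho\circ\Psi$ of the normalizing transformation, which is guaranteed by the $\rho$-equivariant formal normal form of Lemma \ref{lem:PD-NF} and is precisely the property used to produce $\Psi$. Combined with $\rho\circ\rho=\mathrm{id}$ this gives $\rho\circ\Psi\circ\rho=\Psi$, so the displayed expression collapses to $P\circ\Psi\circ P^{-1}=G$. This proves $c\circ G\circ c=G$, hence that $G=P\Psi P^{-1}$ is real.

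There is no analytic obstacle, since $\Psi$ is merely a formal power series and all identities above hold at the level of formal series. The only thing to watch is the bookkeeping in the middle computation: one must apply $cP=P\rho$ to the left factor and $P^{-1}c=\rho P^{-1}$ to the right factor, so that the two resulting copies of $\rho$ land on opposite sides of $\Psi$, and it is exactly the involution property $\rho^2=\mathrm{id}$ that then allows the equivariance relation to close up the middle. Thus the whole lemma amounts to a clean conjugation identity, with \eqref{eq:51} and the $\rho$-equivariance of $\Psi$ doing all the work.
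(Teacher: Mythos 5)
Your proposal is correct and follows essentially the same route as the paper's own one-line proof, which computes $cP\Psi P^{-1}c=P\rho\Psi\rho P^{-1}=P\Psi\rho^{2}P^{-1}=P\Psi P^{-1}$ using \eqref{eq:51} and the $\rho$-equivariance of $\Psi$. Your write-up just makes the bookkeeping of the two conjugation identities more explicit.
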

\begin{proof}
 The equation $cP\Psi P^{-1}c=P\rho\Psi\rho P^{-1}=P\Psi\rho^2 P^{-1}=P\Psi P^{-1}$ holds.
\end{proof}

Observe that $P\hat \Phi_i P^{-1} =P\Psi^{-1}\circ\tilde\Phi_i\circ\Psi P^{-1} =(P\Psi P^{-1})^{-1}\circ\Phi_i\circ(P\circ\Psi P^{-1})$, we have a  version of theorem \ref{thm:FormalNormalForm} for real diffeomorphisms having a linear part which is diagonal over $\mathbb{C}$ but not necessarily over $\mathbb{R}$. 
\begin{theorem}
Let $(\Phi_1=\Phi,\Phi_2,\ldots,\Phi_p,F_1,\ldots,F_q)$ be a formal non-degenerate discrete integrable system of type $(p,q)$ on $\RR^n$ at a common fixed point, say the origin $0$. If the family \{$\Phi_i^{(1)}\}$ is either projectively hyperbolic or infinitesimally integrable with a weakly non-resonant family of generators, then the family of real diffeomorphisms $\{\Phi_i\}$ is formally conjugated by the real formal transformation $P\Psi P^{-1}$ tangent to Identity to a real normal form $\{P\hat \Phi_i P^{-1}\}$ which is of the form
\beq\label{real-nf}
 (\frac{\hat\Phi_{i1}+\hat\Phi_{i2}}{2},\frac{\hat\Phi_{i1}-\hat\Phi_{i2}}{2\sqrt{-1}},\ldots,\frac{\hat\Phi_{i(2l-1)}+\hat\Phi_{i\,2l}}{2},\frac{\hat\Phi_{i(2l-1)}-\hat\Phi_{i\,2l}}{2\sqrt{-1}} ,\hat\Phi_{i(2l+1)},\ldots,\hat\Phi_{in})(z),
\eeq
where $\hat\Phi_{im}$ denotes the $m$-th component of $\hat\Phi_i$ which is the complex normal form of $\Phi_i$ as in theorem \ref{thm:FormalNormalForm} and $z=(z_1,\ldots,z_n)$ is defined as $z_{2j-1}=x_{2j-1}+x_{2j}\sqrt{-1},\,$  $z_{2j}=x_{2j-1}-x_{2j}\sqrt{-1}$ for $j=1,\ldots,l$ and $z_j=x_j$ for $j>2l$. 
\end{theorem}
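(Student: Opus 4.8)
The plan is to assemble the ingredients prepared in the preceding discussion and then carry out explicitly the linear change of coordinates encoded by $P$. First I would recall that replacing $\Phi_i$ by $\tilde\Phi_i(z)=P^{-1}\Phi_i(P(z))$ yields a non-degenerate integrable system $(\tilde\Phi_1,\ldots,\tilde\Phi_p,\tilde F_1,\ldots,\tilde F_q)$ on $\CC^n$ of type $(p,q)$ that inherits the projective hyperbolicity (resp.\ the infinitesimal integrability with a weakly non-resonant family of generators) of the original family, since the eigenvalues are unchanged; moreover it is $\rho$-equivariant, that is $\tilde\Phi_i\circ\rho=\rho\circ\tilde\Phi_i$, as computed above using \eqref{eq:51}.

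Next I would invoke the $\rho$-equivariant formal normalization of \rl{lem:PD-NF} together with the proof of \rt{thm:FormalNormalForm}, both of which apply verbatim to the complexified system, to produce a formal transformation $\Psi$, tangent to the identity, with $\Psi\circ\rho=\rho\circ\Psi$ and such that $\hat\Phi_i:=\Psi^{-1}\circ\tilde\Phi_i\circ\Psi$ has the form \eqref{good-nf} and satisfies \eqref{intgnf}. From $\Psi\rho=\rho\Psi$ (hence $\Psi^{-1}\rho=\rho\Psi^{-1}$) and $\tilde\Phi_i\rho=\rho\tilde\Phi_i$ one deduces $\hat\Phi_i\circ\rho=\rho\circ\hat\Phi_i$, i.e.\ the normalized family is again $\rho$-equivariant; this is the feature that will force reality after transporting back.

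The transport back is purely algebraic: since $P\hat\Phi_iP^{-1}=(P\Psi P^{-1})^{-1}\circ\Phi_i\circ(P\Psi P^{-1})$ and $P\Psi P^{-1}$ is a real formal transformation tangent to the identity (by the preceding lemma, as $cP\Psi P^{-1}c=P\rho\Psi\rho P^{-1}=P\Psi P^{-1}$), the family $\{\Phi_i\}$ is formally conjugate, through the real tangent-to-identity map $P\Psi P^{-1}$, to $\{P\hat\Phi_iP^{-1}\}$. It only remains to read off the components of $P\hat\Phi_iP^{-1}$, i.e.\ to evaluate $P\hat\Phi_i(z)$ at $z=P^{-1}x$. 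Writing $\hat\Phi_i=(\hat\Phi_{i1},\ldots,\hat\Phi_{in})$ and using that $P$ is block diagonal with $l$ copies of $P_j$ and an identity block of size $n-2l$, the $j$-th two-dimensional block contributes $P_j\,(\hat\Phi_{i(2j-1)},\hat\Phi_{i\,2j})^{\top}=\big(\tfrac12(\hat\Phi_{i(2j-1)}+\hat\Phi_{i\,2j}),\,-\tfrac{\sqrt{-1}}{2}(\hat\Phi_{i(2j-1)}-\hat\Phi_{i\,2j})\big)^{\top}$; since $-\tfrac{\sqrt{-1}}{2}(a-b)=\tfrac{a-b}{2\sqrt{-1}}$, this is exactly the pair displayed in \eqref{real-nf}, while the last $n-2l$ components are left unchanged by the identity block. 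Finally, the prescription $z_{2j-1}=x_{2j-1}+x_{2j}\sqrt{-1}$, $z_{2j}=x_{2j-1}-x_{2j}\sqrt{-1}$ is nothing but $z=P^{-1}x$ read block-wise.

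The only point deserving care — rather than a genuine obstacle, since all the analytic content has already been supplied — is to confirm that the expression \eqref{real-nf}, though written with $\sqrt{-1}$, has real coefficients. This is guaranteed by the $\rho$-equivariance $\rho\hat\Phi_i=\hat\Phi_i\rho$: using $c=P\rho P^{-1}$, equivalently $P^{-1}c=\rho P^{-1}$, one computes $c\,(P\hat\Phi_iP^{-1})\,c=(cP)\hat\Phi_i(P^{-1}c)=P(\rho\hat\Phi_i\rho)P^{-1}=P\hat\Phi_iP^{-1}$, where $\rho\hat\Phi_i\rho=\hat\Phi_i$ because $\rho$ is an involution. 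This says precisely that $P\hat\Phi_iP^{-1}$ commutes with complex conjugation, hence is real; so \eqref{real-nf} is a genuine real normal form and the theorem follows.
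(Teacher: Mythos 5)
Your proposal is correct and follows essentially the same route as the paper: complexify via $P$, apply the $\rho$-equivariant normalization and Theorem \ref{thm:FormalNormalForm} to get $\hat\Phi_i$, transport back by the real transformation $P\Psi P^{-1}$, and deduce reality from $cP=P\rho$ together with $\rho\hat\Phi_i\rho=\hat\Phi_i$. The paper additionally spells out, via the index permutation $\sigma$ swapping $2j-1$ and $2j$, that $\hat\Phi_{i(2j-1)}$ and $\hat\Phi_{i\,2j}$ take complex-conjugate values, but your global argument that $P\hat\Phi_iP^{-1}$ commutes with complex conjugation already yields the stated conclusion.
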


\begin{proof}
From the expression above, we can see $\hat\Phi_{i(2j-1)}(z)$ and $\hat\Phi_{i(2j)}(z)$ have conjugate values.  Indeed, 
according to the properties of $\Psi$ and  $\tilde \Phi_i$ above, we have 
\beq\label{eq:53}
\rho\hat \Phi_i\rho= \rho\Psi\tilde \Phi_i\Psi^{-1}\rho=\Psi\rho\tilde \Phi_i\rho\Psi^{-1}=\Psi\tilde \Phi_i\Psi^{-1}=\hat\Phi_i.
\eeq
Therefore, by composition by $P$ on the left and by $P^{-1}$ on the right of the previous equation and by using \re{eq:51}, we obtain
$$
P\rho\hat \Phi_i\rho P^{-1}=cP\hat \Phi_i P^{-1}c=P\hat \Phi_i P^{-1},
$$
so that $P\hat \Phi_i P^{-1}$ is real.
Let $\sigma$ be the permutation mapping $2j-1$ to $2j$ and vice versa for $j\leqslant l$ and  fixing all integers from $2l+1$ to $n$. As $\mu_{im}$ and $\mu_{i\sigma(m)}$ for $m\leqslant2l$ are a pair of conjugate eigenvalues and $\mu_{im}$ are real for $m>2l$, any element $\gamma:=(\gamma_1,\ldots,\gamma_n)\in\mathcal R_m$ (cf. \re{eq:21}), we have $\gamma^\sigma:=(\gamma_{\sigma(1)},\ldots,\gamma_{\sigma(n)})\in\mathcal R_{\sigma(m)}$.
It follows that if  $\hat\Phi_{im,\gamma}w^\gamma$ is a resonant term in  $\hat\Phi_{im}(w)$, then $\overline{\hat\Phi_{im,\gamma^{\sigma}}}w^{\gamma^{\sigma}}$ is a term in $\hat\Phi_{i\sigma(m)}$ by \eqref{eq:53}. Hence, for $m\leqslant2l$, $\hat\Phi_{im}$ and $\hat\Phi_{i\sigma(m)}$ are a pair of conjugate functions of variables $(z_1,z_2=\bar z_1,\ldots,z_{2l-1},z_{2l}=\bar z_{2l-1},z_{2l+1},\ldots,z_n)$ and for $m>2l$, the values (not the functions) $\hat\Phi_{im}(z_1,\bar z_1,\ldots,z_{2l-1},\bar z_{2l-1},z_{2l+1},\ldots,z_n)$ are real since
\begin{eqnarray*}
\overline{\hat\Phi_{im}(\bar z_2,\bar z_1,\ldots,\bar z_{2l},\bar z_{2l-1},\bar z_{2l+1},\ldots,\bar z_n)}&=& \overline{\hat\Phi_{im}( z_1,z_2,\ldots, z_{2l-1},z_{2l},z_{2l+1},\ldots, z_n)}\\
&=& \hat\Phi_{im}( z_1,z_2,\ldots, z_{2l-1},z_{2l},z_{2l+1},\ldots, z_n).
\end{eqnarray*}
\end{proof}

%
%

Let $(\Phi_1=\Phi,\Phi_2,\ldots,\Phi_p,F_1,\ldots,F_q)$ be a formal non-degenerate discrete integrable system of type $(p,q)$ on $\RR^n$ at a common fixed point, say the origin $0$. We assume that the family of its linear parts $\{A_jx\}$ is either projectively hyperbolic or infinitesimally integrable with a weakly non-resonant family of generators. 
Assume furthermore that the commuting family of real diffeomorphisms $\{\Phi_i\}$ satisfies $A_j\Phi_i=\Phi_i A_j$, for all $i,j$. Here we assume that the matrices $A_j=PD_jP^{-1}$ are simultaneously diagonalizable over $\mathbb{C}$ but not necessarily over $\mathbb{R}$. Then, we have  $(P^{-1}A_j P)(P^{-1}\Phi_i P)=(P^{-1}\Phi_i P)(P^{-1}A_j P)$. Hence, the family $\{P^{-1}\Phi_i P\}$ is in Poincar\'e-Dulac normal form as it commutes with the family of its linear part $\{D_j\}$
Since the family $(P^{-1}\Phi_1P,\ldots,P^{-1}\Phi_pP,F_1\circ P,\ldots,F_q\circ P)$ satisfies assumption of \rt{thm:FormalNormalForm}, then $P^{-1}\Phi_iP$ is of the form \re{good-nf} with \re{intgnf}, for all $i$. Therefore,  $\Phi_i$ is of the form \re{real-nf} in which $\hat \Phi_i$ have to be replaced by $P^{-1}\Phi_iP$.


\begin{thebibliography}{10}
\baselineskip0.4cm
\bibitem{Arn1}
V.I. Arnold, Mathematical methods of classical mechanics,
Graduate Texts in Mathematics 60, Springer-Verlag  New York, 1989.

\bibitem{Arn-geom}
V.I. Arnold, Geometrical methods in the theory of ordinary differential equations, Grundlehren der Mathematischen Wissenschaften, 250, Springer-Verlag, New York, 1988.

\bibitem{Bogoyavlenskij1998}
O. I. Bogoyavlenskij, \textit{Extended integrability and bi-Hamiltonian systems}, Communications in mathematical physics, 1998, vol. 196, no 1, pp. 19-51.

\bibitem{BS20}
D. Bambusi and L. Stolovitch, \textit{Convergence to normal forms of integrable PDEs}, Communications in mathematical physics, 2020, https://doi.org/10.1007/s00220-019-03661-8,  29p.

\bibitem{Chaperon1986Asterisque}
M, Chaperon,  G\'eom\'etrie diff\'erentielle et singularit\'es de syst\`emes dynamiques. Ast\'erisque, no. 138-139 (1986), 444 p.

\bibitem{Chaperon2013}
M. Chaperon, \textit{A Forgotten Theorem on $\ZZ^k\times\RR^m$-action Germs and Related Questions},
Regular and Chaotic Dynamics, 2013, Vol. 18, No. 6, pp. 742–773.


\bibitem{Culver1966}
W. Culver, \textit{On the existence and uniqueness of the real logarithm of a matrix}, Proceedings of the American Mathematical Society, 1966, vol. 17, no 5, pp. 1146-1151.

\bibitem{Eliasson1984}
H. Eliasson, \textit{Hamiltonian systems with Poisson commuting integrals}, Ph. D. Thesis, University of Stockholm, 1984.
\bibitem{Eliasson1990}
H. Eliasson, \textit{Normal forms for Hamiltonian systems with Poisson commuting integrals—elliptic case}, Commentarii Mathematici Helvetici, 1990, vol. 65, no 1, pp. 4-35.

\bibitem{Gantmakher1959}
F. R. Gantmakher,  \textit{The theory of matrices}, American Mathematical Society, 1959.

\bibitem{Gong-Stolovitch2016}
X. Gong and L. Stolovitch, \textit{Real submanifolds of maximum complex tangent space at a CR singular point, I},
Invent. math.  2016, vol. 206, no 2, pp. 293-377.

\bibitem{Ito1989}
 H. Ito, \textit{Convergence of Birkhoff normal forms for integrable systems}, Commentarii Mathematici Helvetici, 1989, vol. 64, no 1, pp. 412-461.
\bibitem{Ito1992}
 H. Ito, \textit{Integrability of Hamiltonian systems and Birkhoff normal forms in the simple resonance case}. Mathematische Annalen, 1992, vol. 292, no 1, pp. 411-444.

\bibitem{Jiang2016}
K. Jiang, \textit{Local normal forms of smooth weakly hyperbolic integrable systems}, Regular and Chaotic Dynamics, 2016, vol. 21, no 1, pp. 18-23.

\bibitem{kappeler-poschel-book}
T. Kappeler and J. P\"{o}schel.
\newblock {\it Kd{V} \& {KAM}}, volume~45 of Ergebnisse der Mathematik und
	ihrer Grenzgebiete. 3. Folge. A Series of Modern Surveys in Mathematics
	[Results in Mathematics and Related Areas. 3rd Series. A Series of Modern
	Surveys in Mathematics].
\newblock Springer-Verlag, Berlin, 2003.

\bibitem{kuksin-perelman}
S. Kuksin and G. Perelman.
\newblock {\it Vey theorem in infinite dimensions and its application to {K}d{V}}.
\newblock Discrete Contin. Dyn. Syst., 27(1):1--24, 2010.

\bibitem{Li-Llibre-Zhang2002}
W. Li, J. Llibre and X. Zhang, \textit{Extension of Floquet's Theory to Nonlinear Periodic Differential Systems and Embedding
Diffeomorphisms in Differential Flows}, American Journal of Mathematics, 2002, vol. 124, no. 1, pp. 107-127.

\bibitem{Liouville1855}
 J. Liouville, \textit{Note sur l'intégration des \'equations diff\'erentielles de la Dynamique, pr\'esent\'ee
au Bureau des Longitudes le 29 juin 1853}, Journal de Mathématiques Pures et Appliquées
(1855): 137-138



\bibitem{Stolovitch2000}
L. Stolovitch, \textit{Singular complete integrability}, Publications Mathématiques de l'Institut des Hautes Études Scientifiques, 2000, vol. 91, no 1, pp. 133-210.
\bibitem{Stolovitch2005}
L. Stolovitch, \textit{Normalisation holomorphe d'alg\`ebres de type Cartan de champs de vecteurs holomorphes singuliers}, Annals of mathematics, 2005, pp. 589-612.

\bibitem{stolo-bsmf}
L.~Stolovitch \textit{Family of intersecting totally real manifolds of {$(\mathbb C^n,0)$} and germs of holomorphic diffeomorphisms}, Bull. Soc. math. France, 2015, 143 (2), p. 247–263.

\bibitem{veselov-intg-Russian}
A. P. Veselov, \textit{Integrable mappings}, Uspekhi Mat. Nauk, 1991, Vol. 46,no 5(281), p. 3-45.
	I

\bibitem{veselov-CMP}
A. P. Veselov, \textit{Growth and integrability in the dynamics of mappings}, Comm. Math. Phys., 1992, Vol. 145, no 1, p. 181-193.


\bibitem{Vey1978}
J. Vey,  \textit{Sur certains systemes dynamiques séparables}, American journal of mathematics, 1978, vol. 100, no 3, pp. 591-614.

\bibitem{Walcher1991}
S. Walcher, \textit{On differential equations in normal form}, Mathematische Annalen, 1991, vol. 291, no 1, pp. 293-314.

\bibitem{Zhang2008}
X. Zhang,  \textit{Analytic normalization of analytic integrable systems and the embedding flows}, Journal of Differential Equations, 2008, vol. 244, no 5, pp. 1080-1092.
\bibitem{Zhang2013}
X. Zhang, \textit{Analytic integrable systems: Analytic normalization and embedding flows}, Journal of Differential Equations, Volume 254, Issue 7, 2013, pp. 3000-3022

\bibitem{Ziglin1982}
S. L. Ziglin,  \textit{Branching of solutions and nonexistence of first integrals in {H}amiltonian mechanics. I}, {Functional Analysis and Its Applications}, 1982, vol. 13, no 3, pp. 181-189.

\bibitem{Zung2005}
N. T. Zung, \textit{Convergence versus integrability in Birkhoff normal form}, Annals of mathematics, 2005, pp. 141-156.
\bibitem{Zung2015}
N. T. Zung, \textit{Non-degenerate singularities of integrable dynamical systems}, Ergodic Theory and Dynamical Systems, 2015, vol. 35, no 3, pp. 994-1008.
\bibitem{Zung2018}
N. T. Zung, \textit{A conceptual approach to the problem of action-angle variables}, Archive for Rational Mechanics and Analysis, 2018, vol. 229, no 2, pp. 789-833.


\end{thebibliography}
\end{document}